\documentclass{amsart}
\usepackage{amscd}
\usepackage{amssymb}
\def\Kweb#1{
http:\linebreak[3]//www.\linebreak[3]math.\linebreak[3]uiuc.
\linebreak[3]edu/\linebreak[3]{K-theory/#1/}}

\def\Star{\operatorname{Star}}
\def\chr{\operatorname{char}}
\def\orb{\operatorname{orb}}

\newcommand{\mathdot}{{\mathbf{\scriptscriptstyle\bullet}}}

\def\cF{\mathcal F}

\def\cO{\mathcal O}

\def\coker{\operatorname{coker}}

\def\Hom{\operatorname{Hom}}

\def\Spec{\operatorname{Spec}}

\def\cdh{\text{cdh}}
\def\zar{\text{Zar}}
\def\lra{\longrightarrow}
\def\map#1{{\buildrel #1 \over \lra}}
\def\lmap#1{{\buildrel #1 \over \longleftarrow}}
\def\into{\rightarrowtail}
\def\onto{\twoheadrightarrow}

\def\vecthree#1#2#3{\scriptscriptstyle
    \begin{bmatrix} #1 \\ #2 \\ #3 \end{bmatrix}}

\def\tOmega{\tilde{\Omega}}

\newcommand{\Q}{\mathbb{Q}}

\newcommand{\bH}{\mathbb{H}}

\newcommand{\R}{\mathbb{R}}
\newcommand{\F}{\mathbb{F}}
\newcommand{\bL}{\mathbb{L}}

\newcommand{\C}{\mathbb{C}}
\newcommand{\Z}{\mathbb{Z}}
\newcommand{\N}{\mathbb{N}}

\def \ie{{\it i.e.,\ }}

\newcommand{\fc}{{\mathfrak c}}

\numberwithin{equation}{section}

\theoremstyle{plain}
\newtheorem{thm}[equation]{Theorem}
\newtheorem{cor}[equation]{Corollary}
\newtheorem{lem}[equation]{Lemma}
\newtheorem{prop}[equation]{Proposition}

\theoremstyle{definition}
\newtheorem{defn}[equation]{Definition}
\newtheorem{ex}[equation]{Example}

\theoremstyle{remark}
\newtheorem{rem}[equation]{Remark}
\newtheorem{subremark}{Remark} [equation]

\begin{document}

\bibliographystyle{plain}

\title{The $K$-theory of toric varieties}

\author{G.~ Corti\~nas}
\thanks{Corti\~nas' research was partially supported by FSE and by grants
ANPCyT PICT 03-12330, UBACyT-X294, JCyL VA091A05, and MEC MTM00958.}
\address{Dep. Matem\'atica, FCEyN-UBA\\ Ciudad Universitaria Pab 1\\
1428 Buenos Aires, Argentina\\ and  Dep. \'Algebra\\ Fac. de Ciencias\\
Prado de la Magdalena s/n\\ 47005 Valladolid, Spain.}
\email{gcorti@agt.uva.es}\urladdr{http://mate.dm.uba.ar/\~{}gcorti}

\author{C.~ Haesemeyer}
\address{Dept.\ of Mathematics, University of Illinois, Urbana, IL
61801, USA} \email{chh@math.uiuc.edu}

\author{Mark E.~Walker}
\thanks{Walker's research was supported by NSF grant DMS-0601666.}
\address{Department of Mathematics \\
University of Nebraska -- Lincoln \\
Lincoln \\ NE \\ 68588-0130 \\ U.S.A.}
\email{mwalker5@math.unl.edu}

\author{C.~ Weibel}
\thanks{Weibel's research was
supported by NSA grant MSPF-04G-184 and the Oswald Veblen Fund}
\address{Dept.\ of Mathematics, Rutgers University, New Brunswick,
NJ 08901, USA} \email{weibel@math.rutgers.edu}

\date{\today}

\begin{abstract}
Recent advances in computational techniques for $K$-theory allow us to
describe the $K$-theory of toric varieties in terms of the $K$-theory of
fields and simple cohomological data.
\end{abstract}

\subjclass[2000]{19D55, 14M25, 19D25, 14L32}

\keywords{algebraic $K$-theory, toric varieties}

\maketitle

\section{Introduction}

In this paper, we revisit the $K$-theory of toric varieties, using the
new perspective afforded by the recent papers \cite{HKH}, \cite{CHSW},
\cite{CHW}.  These papers provide a new technique for
computations of the $K$-theory of a singular algebraic variety $X$ over
a field of characteristic $0$, in terms of the homotopy $K$-theory
of $X$ and cohomological data:
the cyclic homology of $X$ and the $cdh$-cohomology of the
sheaves $\Omega^p$ of K\"ahler differentials.

The homotopy $K$-theory $KH_*(X)$ of an affine toric variety is just the
algebraic $K$-theory of
a Laurent polynomial ring, and is well understood. Even when $X$ is a
non-affine toric variety, $KH_*(X)$ is tractable; we show in Proposition
\ref{Willies} that it is a summand of $K_*(X)$. This allows us to give
a short proof in Proposition \ref{K0} of Gubeladze's classical theorem
(in \cite{Gu88}) that $K_0(X)=\Z$ for affine $X$.

This reduces the problem of understanding $K_*(X)$ to that of
understanding the cyclic homology of $X$ and its $cdh$-cohomology.
Because toric varieties admit resolutions of singularities that are
formed in a purely combinatorial manner, it turns out this is indeed
an accessible problem.

The main goal of this paper is to use these new techniques to give
a streamlined approach to two of Gubeladze's recent results concerning the
$K$-theory of toric varieties: examples of toric varieties with ``huge''
Grothendieck groups \cite{Gu04} and his ``Dilation Theorem'' (verifying
the ``nilpotence conjecture'') \cite{Gu05}.
Our proof of this theorem is considerable shorter than the original.
On the other hand, our approach and Gubeladze's are cousins in the sense
that they have a common ancestor:
Corti\~nas' verification of the KABI conjecture \cite{Cor}.

Since varieties are locally smooth in the $cdh$-topology, it is not
surprising that the $cdh$-fibrant version of cyclic homology is
strongly related to the
$cdh$ cohomology of the sheaf $\Omega^p$ of K\"ahler differentials.
Theorem \ref{Thm1} below shows that, for a toric variety $X$,
the $cdh$ cohomology of $\Omega^p$ is computed by the Zariski cohomology of
Danilov's sheaf of differentials $\tOmega^q_X$.
Since the global sections of $\Omega^p_X$ and $\tOmega^p_X$ can be computed
explicitly for toric varieties, we are able to find easily examples
of toric varieties with huge Grothendieck groups; see Example \ref{huge}.

Gubeladze's Dilation Theorem (stated and proven in Theorem \ref{Thm3} below)
asserts, roughly speaking, that after inverting the action of ``dilations,''
the $K$-theory of a toric variety becomes homotopy invariant.
Our Theorem \ref{Thm2} shows
that, after inverting the action of dilations, the global sections of
$\tOmega^q_X$ agree with the Hochschild homology groups
$HH_q(X)$. By the technique of \cite{CHSW}, this quickly leads to
our new proof of Gubeladze's theorem.

\medskip

{\em Notation.}
Throughout this paper, we will adhere to  the following notation.  Let $N$ be
a free abelian group of rank $n < \infty$ and let $M = N^* = \Hom(N,
\Z)$. Define $N_\R = N \otimes_\Z \R$ and $M_\R = \Hom_\R(N_\R, \R)
\cong M \otimes_\Z \R$. For $m \in M_\R, n \in N_\R$,
let $\langle m,n \rangle$ denote the value of $m$ at $n$. Finally, let
$k$ denote a field of characteristic $0$.

\section{Review of toric varieties}

The material in this section may be found in standard texts, such as
\cite{Fulton} or \cite{Dani}.

A {\em strongly convex rational cone} in $N_\R$ is a subset $\sigma
\subset N_\R$ that is a cone spanned by finitely many vectors in $N$
and that contains no lines. That is,
$\sigma = \R_{\geq 0} v_1 + \cdots + \R_{\geq 0} v_k$
for some $v_1, \dots , v_k \in N \subset
N_\R$ and whenever both $u$ and $-u$ belong to $\sigma$, we must have
$u = 0$.  Given such a cone $\sigma$, let $\sigma^\vee \subset M_\R$
denote the dual cone, defined to consist of those $m \in M_\R$ such
that $\langle m, - \rangle \geq 0$ on $\sigma$.  Note that
$\sigma^\vee \cap M$ is the
abelian monoid (under addition of functions) of linear functions with
integer coefficients on $N_\R$ whose restrictions to $\sigma$ are
nowhere negative.  A {\em face} of $\sigma$ is a subset $\tau$ of the
form
\begin{equation}\label{sigma-m}
 \sigma(m) = \{n \in \sigma \, | \, \langle m,n \rangle = 0 \}
\end{equation}

for some $m \in \sigma^\vee$.
Observe that a face of a strongly convex rational cone
is again a strongly convex rational cone.  We write $\tau \prec
\sigma$ to indicate that $\tau$ is a face of $\sigma$.

Recall that $k$ denotes a field of characteristic zero.

The affine toric $k$-variety associated to a strongly convex rational
cone $\sigma$ is $U_\sigma = \Spec k[\sigma^\vee \cap M]$.
We write elements of the monoid ring
$k[\sigma^\vee \cap M]$ as $k$-linear combinations of the set of formal symbols
$\{ \chi^m \, | \, m \in \sigma^\vee \cap M\}$,
so that multiplication in this ring is given on this $k$-basis by
$\chi^m \cdot \chi^{m'}= \chi^{m+m'}$.

A {\em fan} $\Delta$ in $N_\R$ is a finite collection of strongly convex
rational cones in $N_\R$ such that (1) any face of a cone in $\Delta$
is again in $\Delta$ and (2) the intersection of any two cones in
$\Delta$ is a face of each. If $\tau$ is a face of $\sigma$, then
$U_\tau \to U_\sigma$ is an open immersion, because the evident map
$k[\sigma^\vee \cap M] \to k[\tau^\vee \cap M]$ is given by
inverting a finite number of the $\chi^m$.
It follows that for any fan $\Delta$, we may form a scheme $X(\Delta)$
by patching together the affine schemes $U_\sigma$ corresponding to

cones $\sigma$ along the open
subschemes associated to their intersections.

We call $X(\Delta)$ the {\em toric variety} associated to $\Delta$.

\medskip
{\it Orbits.}
We write $T_N = \Spec k[M]$ for the $n$-dimensional torus associated
to $N$. Observe that $T_N$ acts on each $U_\sigma$ --- equivalently, the
ring $k[\sigma^\vee \cap M]$ is naturally $M$-graded with weight $m$
part being $k \cdot \chi^m$, if $m \in \sigma^\vee$, and $0$ if $m
\notin \sigma^\vee$. Since these actions are compatible,
the torus $T_N$ acts on $X(\Delta)$.

The orbits of this action are tori, and are in 1--1
correspondence with the cones of $\Delta$; thus $X(\Delta)$
is the disjoint union of the orbits $\orb(\tau)$
corresponding to the $\tau \in \Delta$.
To describe the orbit for $\tau$, let $\Z(\tau \cap N)$ denote the subgroup of
$N$ generated by $\tau \cap N$, and let $\overline{N}$ be the free
abelian group $N/\Z(\tau \cap N)$. Then $\orb(\tau) \cong T_{\overline{N}}$.
Note that the orbit corresponding to
the minimal cone $\{0\}$ is the dense open $\orb(0)=U_0$, and is
naturally isomorphic to $T_N$.

We write $V_\Delta(\sigma)$ for the closure of $\orb(\sigma)$ in
$X(\Delta)$. The orbits in $V_\Delta(\sigma)$ are indexed by
the {\it star} of $\sigma$, $\Star_\Delta(\sigma)$,
defined as the set of cones in $\Delta$ containing $\sigma$:
$$
V_\Delta(\sigma) = \coprod_{\sigma \prec \tau}
\orb(\tau).
$$
Each  orbit-closure $V_\Delta(\sigma)$ has the structure of a toric
variety. To see this, let $\overline{N} = N/\Z(\sigma \cap N)$. Then
$\{\overline{\epsilon} \, | \, \sigma \prec \epsilon \}$
forms a fan in $\overline{N}_\R$, and the corresponding toric
variety is $V_\Delta(\sigma)$.
The torus $T_{\overline{N}}$ is a quotient of $T_N$ and the inclusion
$V_\Delta(\sigma) \subset X(\Delta)$ is $T_N$-equivariant, the
action of $T_N$ on $V_\Delta(\sigma)$ being induced by the quotient map
$T_N\onto T_{\overline{N}}$. In the case where $\Delta$
has a single maximal
cone $\epsilon$, so that $\sigma$ is a face of $\epsilon$, we have
$$
V_\Delta(\sigma) =
\Spec k[\epsilon^\vee \cap M \cap  \sigma^\perp],
$$
and the closed immersion
$V_\Delta(\sigma) \hookrightarrow U_\epsilon$ is given by the ring surjection
$$
\Spec k[\epsilon^\vee \cap M] \onto
\Spec k[\epsilon^\vee \cap M \cap  \sigma^\perp]
$$
sending $\chi^m$ to $0$, if $m \notin \sigma^\perp$, and to $\chi^m$,
if $m \in \sigma^\perp$.

It is useful to regard
the open complement of $V_\Delta(\sigma)$ in $X(\Delta)$ as the toric
variety corresponding to the largest sub-fan of $\Delta$ in $N_\R$
that does not contain $\tau$.

Every toric variety is normal, but need not be smooth.  A toric variety
$X(\Delta)$ is smooth if and only if, for every cone $\sigma$ in the
fan $\Delta$, the minimal lattice points along the 1-dimensional faces
(rays) of $\sigma$ form part of a $\Z$-basis of $N$.
In particular, in order for $X(\Delta)$ to be smooth,
the set of rays of each cone must be $\R$-linearly independent
(such a cone is said to be {\em simplicial}).

\medskip
{\it Resolution of Singularities.}
We will need a detailed description of resolutions of singularities
for toric varieties, which we now recall from \cite{Fulton}.
If $v \in N$ is contained in one (or more) of the cones of $\Delta$,
one may subdivide $\Delta$ by the ray $\rho=\R_{\ge0}v$ through $v$
to form a new fan $\Delta'$ in $N_\R$ as follows:
If $\tau \in \Delta$ does not contain $\rho$, then $\tau$
is also a cone of $\Delta'$. For each cone $\tau \in \Delta$
containing $\rho$ and for each face $\nu$ of $\tau$
not containing $\rho$, $\Delta'$ contains the cone spanned by $\rho$ and $\nu$:
$$
\tilde{\nu} := \nu + \R_{\geq 0} \rho.
$$

Finally, $\rho$ itself belongs to $\Delta'$.
Thus if $\sigma \in \Delta$ is the minimal cone of
$\Delta$ containing $\rho$, then $\Delta'$ is the disjoint union of
$\Delta \setminus \Star_\Delta(\sigma)$ and
$\Star_{\Delta'}(\rho)$.

There is a map of toric varieties $X' = X(\Delta') \to X = X(\Delta)$
and it is proper, birational, and equivariant with respect to the action of
the torus $T_N$. Starting with any toric variety $X(\Delta)$, one
can arrive at a desingularization of $X(\Delta)$ by performing a finite
number of subdivisions of this type.

Suppose $\Delta'$ is the fan obtained by subdividing $\Delta$ by
inserting a ray $\rho$, and let $\sigma \in \Delta$ be the minimal cone
in $\Delta$ containing $\rho$. Then the description of the orbit-closures
given above makes it clear that
\begin{equation} \label{E2}
\begin{CD}
\llap{$V'=$} V_{\Delta'}(\rho) @>{i'}>> X(\Delta') \rlap{$=X'$} \\
@V{}VV @VV{\pi}V \\
\llap{$V =$} V_{\Delta}(\sigma) @>{i}>> X(\Delta) \rlap{$=X$}\\
\end{CD}
\end{equation}
is an abstract blow-up square. That is, this a pull-back square in
which the horizontal arrows are closed immersions and the map on open
complements is an isomorphism:
$$
X(\Delta') \setminus V_{\Delta'}(\rho) \map{\cong}
X(\Delta) \setminus V_{\Delta}(\sigma).
$$

As with any abstract blow-up, the maps
$\{X(\Delta') \to X(\Delta), V_{\Delta}(\sigma)\to X(\Delta) \}$
form a covering for the $cdh$-topology.
Recall that
the torus $T_N$ acts on each variety in the above square and each map in this
square is $T_N$-equivariant.

\bigskip\goodbreak
\section{Danilov's sheaves $\tOmega^p$}\label{tOmega}

In this section, we introduce the coherent sheaves $\tOmega^p_X$,
first defined by Danilov \cite[4.2]{Dani}. We will see in the next section
that their Zariski cohomology
groups turn out to give the $cdh$-cohomology groups of $\Omega^p$.

Given a fan $\Delta$, let $\Delta(1)$ denote the collection of rays
in $\Delta$; the 1-skeleton of $\Delta$ is the fan $\Delta(1)\cup\{0\}$
and its toric variety $X^{(1)}$ lies in the smooth locus of $X(\Delta)$.

\begin{defn}
For a toric $k$-variety $X = X(\Delta)$ defined by a fan $\Delta$ in $N_\R$,
we define $\tOmega^p_X$ to be the coherent sheaf on $X$
fitting into the exact sequence
$$
0 \to \tOmega^p_X \to \cO_X \otimes_\Z \wedge^p(M) \map{\delta}
\bigoplus_{\rho \in\Delta(1)} \cO_{V_\Delta(\rho)} \otimes_\Z
                \wedge^{p-1}(M \cap \rho^\perp).
$$

The component of the map $\delta$ indexed by $\rho$
sends $f \otimes (m_1 \wedge \cdots \wedge m_p)$ in
$\cO_X \otimes \wedge^p_\Z(M)$ to
$$
i^*(f) \otimes
\left(\sum_{i} (-1)^i \langle m_i, n_\rho \rangle
m_1 \wedge \cdots \wedge \hat{m_i} \wedge
\cdots \wedge m_p \right)
$$
where $i:V_\Delta(\rho) \hookrightarrow X$ is the canonical closed immersion,
and $n_\rho \in N$ is the minimal lattice point on $\rho$.
By convention, $\tOmega^0_X=\cO_X$.
\end{defn}

On the affine $U_\sigma$, the ring $\cO(U_\sigma)$ is $M$-graded,
so the sections
of $\cO_X\otimes\wedge^pM$ are $M$-graded with $\wedge^pM$ in weight~0;
the weight $m$ summand is $k\cdot\chi^m\otimes\wedge^pM$ if
$m\in\sigma^\vee$.
Since $\delta$ is graded, it follows that each $\tOmega^p_X(U_\sigma)$ is
$M$-graded.

\begin{rem}\label{log-poles}
Sections of $\tOmega^1_X$ may be considered as differential forms on $X$,
with $1\otimes m$ corresponding to the form $d\log(\chi^m)=d\chi^m/\chi^m$.
On a nonsingular cone $\sigma$, we may identify $\cO\otimes\wedge^p M$ with
the locally free sheaf $\Omega^p(\log D)$ of differentials with
logarithmic poles along $D=\cup V(\rho)$. This identifies the map
$\delta$ with the residue map, so we have
$\Omega^p\vert_{U_\sigma} \cong \tOmega^p\vert_{U_\sigma}$.
\end{rem}

As shown by Danilov \cite[4.3]{Dani},
the sheaf $\tOmega^p_X$ is naturally isomorphic to $j_*(\Omega^p_U)$,
where $j: U \hookrightarrow X$
is the immersion of the open subscheme $U$ of smooth points of $X$.
Applying Remark \ref{log-poles} to $X^{(1)}\hookrightarrow U$,
we see that $\tOmega^p_X = j^{(1)}_*(\Omega^p_{X^{(1)}})$ where

$j^{(1)}: X^{(1)} \hookrightarrow X$ is the evident open immersion.

We will need an explicit description of the $M$-grading on $\tOmega^1$,
or rather on the module of sections $\tOmega^1(U_\sigma)$ over
an affine toric variety $U_\sigma$. (See \cite[4.2.3]{Dani}.)
When $m \in \sigma^\vee \cap M$,
its weight $m$ summand is the subspace
$\tOmega^1(U_\sigma)_m= k\cdot\chi^m\otimes(M \cap \sigma(m)^\perp)$
of the weight $m$ summand $k\cdot\chi^m\otimes M$ of
$\cO(U_\sigma)\otimes M$.
Here $\sigma(m)^\perp$ is the orthogonal complement of the face $\sigma(m)$
of $\sigma$ defined in \eqref{sigma-m} by the vanishing of $m$:
For $m \notin \sigma^\vee$, $\tOmega^1(U_\sigma)_m = 0$
because $\cO(U_\sigma)_m=0$.
More generally, we have for $m \in M$ and $p\ge0$
\begin{equation} \label{E1}
\tOmega^p_X(U_\sigma)_m =
\begin{cases} k\cdot \chi^m\otimes
\wedge^p(M \cap \sigma(m)^\perp) & \text{if $m \in \sigma^\vee$} \\
0 & \text{if $m \notin \sigma^\vee$.} \\
\end{cases}
\end{equation}

It is instructive to compare \eqref{E1} to the analogous formula for
$\Omega^p(U_\sigma)$ and $HH_p(U_\sigma)$,
which are graded by the submonoid $\sigma^\vee\cap M$ of $M$.
There is a natural map from the module $\Omega^p_X$ of K\"ahler
differentials to $\tOmega^p_X$. On $U_\sigma$ it is the $M$-graded map
induced by the $M$-graded map $\Omega^p(U_\sigma)\to
\cO(U_\sigma)\otimes\wedge^p(M)$ defined by:
\begin{equation}\label{E3}
\chi^{m_0}\, d\chi^{m_1}\wedge\cdots\wedge d\chi^{m_p} \mapsto
(1/p!)\chi^{m} \otimes \bigl( m_1\wedge\cdots\wedge m_p \bigr),
\qquad m=\sum m_i.
\end{equation}

Recall that the orbit-closure $V(\tau)$ for the face $\tau$ is
$\Spec(k[\sigma^\vee\cap M\cap\tau^\perp])$.

\goodbreak
\begin{lem}\label{HH-m}
For each $m\in\sigma^\vee\cap M$, let $V=V(\sigma(m))$ denote the
orbit-closure for the face $\sigma(m)$ of $\sigma$.
Then the closed immersion $V\subset U_\sigma$
induces an isomorphism
$HH_*(U_\sigma)_m \cong HH_*(V)_m$. In particular, for all $p$:
\[
\Omega^p_X(U_\sigma)_m = \Omega^p(V)_m
\]
\end{lem}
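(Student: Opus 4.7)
\medskip

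The plan is to exploit the $M$-grading on the Hochschild complex of the monoid algebra and show that, weight by weight, the surjection $k[\sigma^\vee\cap M]\onto k[\sigma^\vee\cap M\cap\sigma(m)^\perp]$ is already an isomorphism in weight $m$.

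First I would recall that for the commutative monoid $S=\sigma^\vee\cap M$, the Hochschild complex $C_*(k[S])$ decomposes as a direct sum over $M$: the weight-$m$ part $C_n(k[S])_m$ has a $k$-basis indexed by tuples $(m_0,\dots,m_n)\in S^{n+1}$ with $m_0+\cdots+m_n=m$, with the usual face and degeneracy maps that preserve this grading. The analogous statement holds for the monoid algebra of $V=V(\sigma(m))$, whose monoid is $\sigma^\vee\cap M\cap\sigma(m)^\perp$.

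The central observation—and the only nontrivial input—is the following positivity argument. Suppose $m\in\sigma^\vee\cap M$ and $m=m_0+\cdots+m_n$ with all $m_i\in\sigma^\vee\cap M$. For any $v\in\sigma(m)$ we have $\langle m,v\rangle=0$ by definition of $\sigma(m)$, while $\langle m_i,v\rangle\ge 0$ for every $i$ since $m_i\in\sigma^\vee$. Since the nonnegative numbers $\langle m_i,v\rangle$ sum to zero, each must vanish, and so every $m_i$ lies in $\sigma(m)^\perp$. Thus each $m_i$ lies in $\sigma^\vee\cap M\cap\sigma(m)^\perp$, i.e.\ in the monoid defining $V$.

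Combined with step one, this says that the inclusion of Hochschild complexes induced by $V\hookrightarrow U_\sigma$ is already an equality in weight $m$ in each simplicial degree: $C_*(U_\sigma)_m = C_*(V)_m$. Taking homology gives the isomorphism $HH_*(U_\sigma)_m\cong HH_*(V)_m$. For the ``in particular'' statement, the same positivity argument applies directly to the $M$-graded module of K\"ahler differentials: a generator $\chi^{m_0}\,d\chi^{m_1}\wedge\cdots\wedge d\chi^{m_p}$ of weight $m$ forces each $m_i\in\sigma(m)^\perp$, so $\Omega^p(U_\sigma)_m$ and $\Omega^p(V)_m$ have the same generators and relations. (Alternatively, in characteristic zero one reads off the $\Omega^p$ identification from the HKR decomposition of $HH_p$.) I don't anticipate a real obstacle; the only delicate point is verifying that the graded decomposition of the Hochschild complex is respected by the differential, which is routine because multiplication in $k[S]$ is $M$-graded.
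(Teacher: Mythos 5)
Your proof is correct and is essentially the same as the paper's: both rest on the $M$-grading of the Hochschild complex and the positivity argument that a decomposition $m=\sum u_i$ with $u_i\in\sigma^\vee\cap M$ forces each $u_i\in\sigma(m)^\perp$ (pair against $n\in\sigma(m)$ and use $\sum\langle u_i,n\rangle=\langle m,n\rangle=0$), hence equality of the weight-$m$ complexes. The paper phrases it via the retraction $k[B]\to k[A]\to k[B]$ and then shows the induced inclusion is a weight-$m$ surjection, whereas you directly conclude equality of complexes; this is only a cosmetic difference, and your handling of the $\Omega^p$ consequence is likewise fine.
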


\begin{proof}
For convenience, let us set $A=\sigma^\vee\cap M$ and
$B=A\cap\sigma(m)^\perp$, so that $U_\sigma=\Spec(k[A])$ and
$V(\sigma(m))=\Spec(k[B])$. The immersion $V\subset U_\sigma$
corresponds to a surjection $k[A]\to k[B]$, which is split by the evident
inclusion $\iota:k[B]\to k[A]$. Hence $HH_*(k[B])$ is a summand of
$HH_*(k[A])$, and it suffices to show that $\iota$ induces a surjection
on the weight $m$ summand of the complex for Hochschild homology.

Now the degree $p$ part of the Hochschild complex for $k[A]$ is
$k[A]^{\otimes p+1}$, so the weight $m$ summand has a basis consisting of
the $\chi^{u_0}\otimes\chi^{u_1}\cdots\otimes\chi^{u_p}$ where $u_i\in A$
and $\sum u_i=m$. If $n\in\sigma(m)$, then $\langle u_i,n\rangle \ge0$
and $\sum_i \langle u_i,n\rangle =\langle m,n\rangle =0$. This forces
each $\langle u_i,n\rangle =0$, \ie  $u_i\in B$.
Hence $k[B]^{\otimes p+1}_m = k[A]^{\otimes p+1}_m$, as claimed.
\end{proof}

\begin{lem}\label{orbitBU}
Every orbit blow-up square \eqref{E2} determines
a distinguished triangle on $X_\zar$ of the form
$$
\tOmega^p_X \to \R\pi_*\tOmega^p_{X'}\oplus i_*\tOmega^p_V \to
\R\pi_*i'_*\tOmega^p_{V'} \to \tOmega^p_X[1],
$$
and hence a long exact sequence of Zariski cohomology groups:
$$
\cdots \to H^q(X, \tOmega^p)
\to H^q(X', \tOmega^p)  \oplus H^q(V, \tOmega^p) \to
H^q(V', \tOmega^p)
\to H^{q+1}(X, \tOmega^p)  \to \cdots.
$$
\end{lem}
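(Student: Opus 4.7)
The plan is to show that the square
\[
\begin{CD}
\tOmega^p_X @>>> \R\pi_*\tOmega^p_{X'} \\
@VVV @VVV \\
i_*\tOmega^p_V @>>> \R\pi_* i'_*\tOmega^p_{V'}
\end{CD}
\]
is homotopy cartesian in $D(X_{\zar})$, which immediately yields the claimed distinguished triangle and, upon taking Zariski cohomology, the stated long exact sequence. The assertion is local on $X_{\zar}$, so I verify it on each affine open $U_\tau$ for $\tau\in\Delta$. If $\sigma\not\prec\tau$, the subdivision by $\rho$ leaves $U_\tau$ untouched: $\pi^{-1}(U_\tau)=U_\tau$ canonically, the bottom row vanishes on $U_\tau$, and the top arrow is the identity. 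So the content of the lemma lies in the case $\sigma\prec\tau$.

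In that case $\pi^{-1}(U_\tau)$ is the toric variety associated to the sub-fan of $\Delta'$ consisting of cones contained in $\tau$; it admits an affine open cover by the $U_{\tilde\nu}$, where $\nu$ ranges over the faces of $\tau$ not containing $\rho$. All four objects in the square are $T_N$-equivariant and therefore carry $M$-gradings, so it suffices to verify exactness one weight $m\in M$ at a time. For the two terms living on $X$ and $V$, formula \eqref{E1} (and its analogue for $V$, using the fan description of $V_\Delta(\sigma)$ in $\overline{N}_\R$) gives the weight-$m$ part explicitly as $k\cdot\chi^m\otimes\wedge^p(M\cap\tau(m)^\perp)$ when the relevant positivity conditions ($m\in\tau^\vee$, respectively $m\in\tau^\vee\cap\sigma^\perp$) hold, and zero otherwise. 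For the derived pushforwards from $X'$ and $V'$, I compute each weight using the \v{C}ech complex of the affine cover above (and its restriction to $V'$), with each \v{C}ech piece again described by \eqref{E1}. This reduces the lemma to a purely combinatorial exactness statement parametrized by $m$.

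Splitting according to whether $m\notin\rho^\perp$, $m\in\rho^\perp\setminus\sigma^\perp$, or $m\in\sigma^\perp$, one shows that in each case the weight-$m$ contributions of the four terms, together with the residue map $\delta$ of the Definition, assemble into an exact four-term sequence: when $m\notin\rho^\perp$ the $V$ and $V'$ terms vanish and the claim reduces to $\R\pi_*\tOmega^p_{X'}(U_\tau)_m=\tOmega^p_X(U_\tau)_m$; the remaining two cases proceed by pairing via $\delta$. The main obstacle is the \v{C}ech computation of $\R\pi_*\tOmega^p_{X'}(U_\tau)_m$ (and the analogue for $V'$): for each $m$ one must show that the \v{C}ech double complex, whose pieces are the exterior-algebra factors $\wedge^p(M\cap\tilde\nu(m)^\perp)$ from \eqref{E1}, has cohomology concentrated in the expected degree. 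This is a combinatorial statement about how $\Star_{\Delta'}(\rho)\cap\tau$ sits inside the face lattice of $\tau(m)$; once established, a parallel argument inside $V'$ handles the fourth term and the Mayer--Vietoris triangle follows.
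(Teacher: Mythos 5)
Your proposal reduces the lemma to a combinatorial claim that it never actually proves. The preparatory moves are all fine: localizing to affine opens $U_\tau$, splitting into the cases $\sigma\prec\tau$ and $\sigma\not\prec\tau$, invoking $T_N$-equivariance to decompose weight by weight, and computing $\R\pi_*$ over $U_\tau$ via the {\v C}ech complex of the affine cover $\{U_{\tilde\nu}\}$ of $\pi^{-1}(U_\tau)$. But then you write that "one shows that in each case the weight-$m$ contributions \dots assemble into an exact four-term sequence," and you yourself flag that "the main obstacle is the {\v C}ech computation \dots This is a combinatorial statement \dots once established, \dots the Mayer--Vietoris triangle follows." That unestablished {\v C}ech acyclicity is precisely the nontrivial content of the lemma, and the proposal stops exactly where the real work begins.

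The paper takes a shorter route and does not attempt this computation. It forms the short exact sequences $0 \to \tOmega^p_{(X,V)} \to \tOmega^p_X \to i_*\tOmega^p_V \to 0$ on $X$ and the analogous one on $X'$, applies $\R\pi_*$ to the latter to get a morphism of distinguished triangles, and then cites Danilov's Proposition 1.8 in \cite{Dan1479} for the statement that $\tOmega^p_{(X,V)} \to \R\pi_*\tOmega^p_{(X',V')}$ is a quasi-isomorphism (equivalently, $R^j\pi_*\tOmega^p_{(X',V')}=0$ for $j>0$ and $\pi_*\tOmega^p_{(X',V')}\cong\tOmega^p_{(X,V)}$); the desired triangle then follows formally. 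What you are proposing is, in effect, to reprove Danilov's Proposition 1.8 from scratch by a direct weight-by-weight {\v C}ech argument. That is a legitimate alternative in principle and would make the lemma self-contained, but it requires you to actually carry out the acyclicity argument: for each fixed $m$, analyze which cones $\tilde\nu$ of $\Star_{\Delta'}(\rho)$ inside $\tau$ contribute in weight $m$, show the resulting nerve-type complex (with the exterior-algebra coefficients $\wedge^p(M\cap\tilde\nu(m)^\perp)$ from \eqref{E1}) has cohomology only in degree $0$, and match it against $\tOmega^p_X(U_\tau)_m$ and the $V$, $V'$ terms. Until that argument is supplied, or Danilov is cited, the proof has a genuine gap at its central step.
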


\begin{proof}
We have short exact sequences of coherent sheaves
$$
0 \to \tOmega^p_{(X,V)} \to \tOmega^p_X \to i_* \tOmega^p_V \to 0
$$
on $X$, and
$
0 \to \tOmega^p_{(X',V')} \to \tOmega^p_{X'} \to i_* \tOmega^p_{V'} \to 0
$
on $X'$.
Applying $\R\pi_*$ to the latter yields a morphism of distinguished triangles
$$
\begin{CD}\minCDarrowwidth=12pt
\tOmega^p_{(X,V)} @>>> \tOmega^p_X @>>> i_* \tOmega^p_V\vspace{-3pt} \\
@VVV @VVV @VVV \vspace{-3pt} \\
\R\pi_* \tOmega^p_{(X',V')} @>>> \R\pi_* \tOmega^p_{X'} @>>>
\R \pi_* i_* \tOmega^p_{V'}
\end{CD}
$$
Danilov proved in \cite[Prop 1.8]{Dan1479} that
the left vertical map is a quasi-isomorphism, \ie that
$\R^j\pi_*\tOmega^p_{(X',V')}=0$ for $j>0$, and
$\tOmega^p_{(X,V)}\map{\simeq}\pi_* \tOmega^p_{(X',V')}$.
The distinguished triangle follows from this in a standard way.
\end{proof}

\begin{rem}\label{ratsing}
Danilov \cite[8.5.1]{Dani} proved that if $\pi:X'\to X$ is a morphism of
toric varieties resulting from a subdivision of the fan, then
$\cO_X\map{\simeq}\R\pi_*\cO_{X'}$, \ie
$\pi_*\cO_{X'}=\cO_X$ and $R^i\pi_*\cO_{X'}=0$ for $i>0$.
This proves that toric varieties have (at most) rational singularities.
\end{rem}

\goodbreak
\section{The $cdh$-cohomology of $\Omega^p$ for toric varieties}

In this short section, we prove Theorem \ref{Thm1}, that
Danilov's sheaves compute the $cdh$-cohomology groups
$H^*_\cdh(X, \Omega^p)$ for toric varieties.

\begin{thm} \label{Thm1}
Let $X$ be an arbitrary toric $k$-variety. There is an isomorphism
$$
H^*_\zar(X, \tOmega^p_X) \cong H^*_\cdh(X, \Omega^p)
$$
for all $p$, natural for morphisms of toric varieties and for
the closed embedding of an orbit-closure of $X$ into $X$.
\end{thm}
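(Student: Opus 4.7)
My plan is to prove Theorem \ref{Thm1} by a double induction on the pair $(\dim X, m(X))$, where $m(X)$ is a measure of non-smoothness of $X = X(\Delta)$---for instance, the number of star subdivisions needed to desingularize $\Delta$ via the algorithm recalled in Section 2. The two long exact sequences that will drive the inductive step are: on the Zariski/Danilov side, the sequence provided by Lemma \ref{orbitBU}; on the cdh side, the Mayer--Vietoris sequence obtained by cdh descent applied to the orbit blow-up square \eqref{E2}, since abstract blow-up squares are cdh covers.

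For the base case, suppose $X$ is smooth. The identification $\tOmega^p_X = j_*\Omega^p_U$ (where $j\colon U \hookrightarrow X$ is the inclusion of the smooth locus) reduces to $\tOmega^p_X = \Omega^p_X$ when $U = X$, so $H^*_\zar(X,\tOmega^p_X) = H^*_\zar(X,\Omega^p_X)$. The identification $H^*_\cdh(X,\Omega^p) = H^*_\zar(X,\Omega^p)$ for smooth $X$ is the standard cdh-descent result for coherent sheaves on smooth schemes, and will be the main external input.

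For the inductive step, assume $X$ is singular and pick a ray $\rho$ subdividing $\Delta$ to produce $\Delta'$ with $m(X(\Delta')) < m(X)$. Let $\sigma$ be the minimal cone of $\Delta$ containing $\rho$; since $X$ is not smooth we may arrange $\sigma \neq 0$, so $V = V_\Delta(\sigma)$ and $V' = V_{\Delta'}(\rho)$ both have dimension strictly less than $\dim X$. The outer induction on dimension handles $V$ and $V'$, while the inner induction on $m$ handles $X' = X(\Delta')$. Comparing the two long exact sequences described above and applying the five lemma then yields the isomorphism for $X$, and the naturality with respect to orbit-closure inclusions $V \hookrightarrow X$ falls out of the naturality of the Mayer--Vietoris sequences.

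The step I expect to require the most care is producing a natural comparison map between the two cohomology theories that is compatible with both Mayer--Vietoris sequences. I would build it from the natural morphism $\Omega^p_X \to \tOmega^p_X$ of \eqref{E3}: this map is an isomorphism on the smooth locus of every toric variety (by Remark \ref{log-poles}), so it becomes an isomorphism after cdh sheafification, yielding $H^*_\cdh(X,\Omega^p) \cong H^*_\cdh(X,\tOmega^p)$. Composing with the change-of-site morphism $H^*_\zar(X,\tOmega^p) \to H^*_\cdh(X,\tOmega^p)$ supplies the desired comparison map. Naturality in toric morphisms and in orbit-closure inclusions then follows from the naturality of $\Omega^p \to \tOmega^p$ and of the Zariski-to-cdh comparison, and the induction above establishes that this map is the claimed isomorphism.
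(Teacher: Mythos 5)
Your double induction via the five lemma is a genuine alternative to the paper's argument, which instead uses a single smooth toric $cdh$-hypercover $\pi\colon Y_\mathdot\to X$ (built by iterating the orbit blow-ups, following Deligne) and proves the isomorphism at the level of complexes: combining Lemma~\ref{orbitBU}, Remark~\ref{log-poles}, Lemma~\ref{RaRpi}, and $cdh$-descent on the smooth $Y_n$ yields a zigzag of quasi-isomorphisms $\tOmega^p_X \simeq \R\pi_*\tOmega^p_{Y_\mathdot} \simeq \R\pi_*\Omega^p_{Y_\mathdot}\simeq \R a_*a^*\Omega^p|_X$ in the derived category of Zariski sheaves on $X$, and then naturality comes from the fact that any two smooth toric hypercovers have a common refinement. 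Both approaches share the key inputs (Lemma~\ref{orbitBU}, the blow-up square \eqref{E2}, $cdh$-descent for $\Omega^p$ on smooth varieties); what the hypercover buys is that the comparison map is produced once and for all without any five-lemma bookkeeping, and without needing to fix a notion like $m(X)$ of ``distance from smoothness.''

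However, there is a genuine gap in how you build the comparison map. You write $H^*_\cdh(X,\tOmega^p)$, but $\tOmega^p$ is not a presheaf on the big site $(Sch/k)_\cdh$: it is defined only on toric varieties (and its characterization as $j_*\Omega^p_U$ does not give a functorial presheaf on arbitrary $k$-schemes). So the expression $H^*_\cdh(X,\tOmega^p)$, and hence the asserted isomorphism $H^*_\cdh(X,\Omega^p)\cong H^*_\cdh(X,\tOmega^p)$ obtained by ``$cdh$-sheafifying'' the map $\Omega^p\to\tOmega^p$, is not well-posed. Note also that the two natural maps you do have---$\Omega^p_X\to\tOmega^p_X$ and $\Omega^p_X\to\R a_*a^*\Omega^p|_X$---form a cospan, not a map, so they do not directly supply the morphism of long exact sequences that the five lemma requires. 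To make your induction run, you would need to replace this step by a genuine zigzag of triangles compatible with the orbit blow-up square (which, carried out uniformly, essentially recovers the hypercover argument of the paper), or else develop enough of a ``toric $cdh$'' formalism to make $H^*_\cdh(-,\tOmega^p)$ meaningful and compare it with the usual $cdh$ cohomology. The remaining ingredients of your proposal---the base case via Remark~\ref{log-poles} and smooth $cdh$-descent, the dimension drop for $V$ and $V'$, the decrease of $m$ for $X'$---are sound.
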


\begin{ex}
The case $*=0$ of Theorem \ref{Thm1} is that
$\tOmega^p(X)\cong H^0_\cdh(X,\Omega^p)$. This is equivalent to
Danilov's calculation \cite[1.5]{Dan1479} that in \eqref{E2},
$\tOmega^p_X \map{\simeq}\pi_*\tOmega^p_{X'}$ for all $p$.
\end{ex}
\goodbreak

For the proof, we recall that $H^*_\cdh(X,\Omega^p)$ is just
the Zariski hypercohomology of the complex $\R a_*a^*\Omega^p|_X$,
where $a: (Sch/k)_\cdh \to (Sch/k)_\zar$ is the morphism of sites
and $|_X$ denotes the restriction from the big Zariski site
$(Sch/k)_\zar$ to $X_\zar$.

Recall that we can resolve the singularities of a toric variety via
equivariant blow-up squares of the form \eqref{E2}.
Iterating the orbit blow-up operations described in \eqref{E2},
as in \cite[6.2.5]{HodgeIII}
we can find a smooth toric $cdh$-hypercover $\pi:Y_\mathdot\to X$.
The following Mayer-Vietoris lemma is an immediate consequence of
\cite[12.1]{SVBK}.

\begin{lem}\label{RaRpi} For every $cdh$ sheaf $\cF$,
$\R a_*\cF|_X \cong \R\pi_*(\R a_*\cF|_{Y_\mathdot})$.
\end{lem}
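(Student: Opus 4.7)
The plan is to construct the natural comparison map $\R a_*\cF|_X \to \R\pi_*(\R a_*\cF|_{Y_\mathdot})$ arising from the augmentation $\pi: Y_\mathdot \to X$ via the unit of the $(\pi^*, \R\pi_*)$ adjunction, and then to show that it is a quasi-isomorphism of complexes of sheaves on $X_\zar$. Since both sides are complexes of Zariski sheaves on $X$, the question is local on $X_\zar$, so it suffices to check that the induced map on derived sections over each Zariski open $U \subset X$ is a quasi-isomorphism.

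Unwinding the definition of $a_*$ on the big Zariski site, the derived sections of $\R a_*\cF|_X$ over $U$ compute $\R\Gamma_\cdh(U, \cF)$. On the other hand, $\R\pi_*$ of a complex of sheaves on a simplicial scheme is the totalization of the levelwise derived pushforwards, so the derived sections of $\R\pi_*(\R a_*\cF|_{Y_\mathdot})$ over $U$ compute
$$
\mathrm{Tot}\bigl([n] \mapsto \R\Gamma_\cdh(Y_n \times_X U, \cF)\bigr).
$$
Hence the lemma reduces to showing that the natural augmentation map
$$
\R\Gamma_\cdh(U, \cF) \map{\sim}
\mathrm{Tot}\bigl([n] \mapsto \R\Gamma_\cdh(Y_n \times_X U, \cF)\bigr)
$$
is a quasi-isomorphism.

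To finish, I would observe that $cdh$-hypercovers are stable under arbitrary base change, so in particular $Y_\mathdot \times_X U \to U$ is itself a $cdh$-hypercover. The displayed quasi-isomorphism is then exactly the cohomological descent statement for $cdh$-sheaves along $cdh$-hypercovers, which is the content of \cite[12.1]{SVBK}. The main (and essentially only) obstacle is purely formal: one must carefully identify $\R\pi_*$ for a simplicial morphism with the totalization of the levelwise derived pushforwards, and verify that the comparison map coming from the augmentation coincides with the one appearing in the hypercover descent statement. Once those identifications are in place, the result is immediate from the cited reference.
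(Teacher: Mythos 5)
Your proposal is correct and spells out exactly what the paper leaves implicit when it asserts that the lemma is "an immediate consequence of \cite[12.1]{SVBK}": reduce to Zariski-local sections, identify both sides with $cdh$-hypercohomology over $U$ and over $Y_\mathdot\times_X U$, and invoke descent along the base-changed hypercover. The only small caveat is that \cite[12.1]{SVBK} is stated in Mayer--Vietoris form for abstract blow-up squares rather than as a general hypercover-descent theorem, so strictly speaking one should either iterate it along the blow-up squares from which $Y_\mathdot$ is built (as the phrase "Mayer--Vietoris lemma" in the paper suggests) or combine it with Verdier's hypercover theorem; either way the conclusion is the one you reach.
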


\begin{proof}[Proof of Theorem \ref{Thm1}]
As in \cite[5.2.6]{HodgeIII}, Lemma \ref{orbitBU} implies that
the maps $\tOmega^p_X\to\R\pi_*\tOmega^p_{Y_\mathdot}$
are quasi-isomorphisms. By Remark \ref{log-poles}, the maps
$\Omega^p_{Y_\mathdot}\to\tOmega^p_{Y_\mathdot}$ are isomorphisms.
Hence we have quasi-isomorphisms of complexes of Zariski sheaves on $X$:
\[
\R\pi_*\Omega^p_{Y_\mathdot} \map{\simeq} \R\pi_*\tOmega^p_{Y_\mathdot}
\lmap{\simeq} \tOmega^p_X.
\]

\noindent
Now by \cite[2.5]{CHW}, we have
$\Omega^p_{Y_n}\cong\R a_*a^*\Omega^p|_{Y_n}$.
Applying Lemma \ref{RaRpi} to $\cF=a^*\Omega^p$ yields:
\[
\R a_*a^*\Omega^p|_X ~\map{\simeq}~ \R\pi_*(\R a_*a^*\Omega^p|_{Y_\mathdot})
\cong \R\pi_*\Omega^p_{Y_\mathdot}.
\]
Applying $H^*_\zar(X,-)$ yields
$H^*_\cdh(X,\Omega^p) \map{\simeq} H^*_\zar(Y_\mathdot, \Omega^p)
\cong H^*_\zar(X, \tOmega^p)$, an isomorphism
which is natural in the pair $Y_\mathdot\to X$. As any two smooth toric
hypercovers have a common refinement, the isomorphism
$\tOmega^p_X \simeq \R a_*a^*\Omega^p|_X$ in the derived category
is independent of $Y_\mathdot$. The asserted naturality follows.
\end{proof}

Now recall that every variety is locally smooth for the $cdh$ topology.
Hence the Hochschild-Kostant-Rosenberg theorem
implies that the Hochschild homology
sheaf $HH_n$ has $a^*HH_n\cong a^*\Omega^n$.
We write $\bH_\cdh(X,HH)$ for $\R a_*a^*$ applied to the Hochschild complex,
and $\bH_\cdh(X,HH^{(t)})$ for its summand in Hodge weight $t$. We write
the Zariski hypercohomology of these complexes as $\bH^*_\cdh(X,HH)$ and
$\bH^*_\cdh(X,HH^{(t)})$, respectively. By \cite[2.2]{CHW},
$\bH_\cdh(X,HH^{(t)}) \cong \R a_*a^*\Omega^t[t]$. Hence
Theorem \ref{Thm1} translates into the following language:

\begin{cor}\label{HH/k}
For every toric variety $X$,
$\bH^{n}_\cdh(X,HH^{(t)})\cong H_\zar^{t+n}(X,\tOmega_X^t)$, and
$$
\bH^{n}_\cdh(X,HH) \cong
\bigoplus\nolimits_{t\ge0}H_\zar^{t+n}(X,\tOmega_X^t).
$$
\end{cor}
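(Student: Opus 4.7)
The plan is to read this as a formal consequence of Theorem \ref{Thm1} together with the identification $\bH_\cdh(X,HH^{(t)}) \simeq \R a_*a^*\Omega^t[t]$ from \cite[2.2]{CHW}, which is already quoted in the paragraph just before the corollary. Almost all of the work has been done; what remains is to unpack definitions and keep track of the indexing shift.

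First I would unwind $\bH^{n}_\cdh(X,HH^{(t)})$ as the $n$-th Zariski hypercohomology $H^n_\zar(X,\bH_\cdh(X,HH^{(t)}))$. Substituting the cited quasi-isomorphism and using the cohomological shift convention $H^n(\cC[t]) = H^{n+t}(\cC)$ gives
\[
\bH^{n}_\cdh(X,HH^{(t)}) \cong H^{n}_\zar(X, \R a_*a^*\Omega^t[t]) = H^{n+t}_\zar(X, \R a_*a^*\Omega^t) = H^{n+t}_\cdh(X,\Omega^t).
\]
Second, I would apply Theorem \ref{Thm1} directly to identify the last group with $H^{n+t}_\zar(X,\tOmega_X^t)$. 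Chaining these two isomorphisms yields the first claim.

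For the sum formula, I would invoke the Hodge decomposition of the Hochschild complex in characteristic zero, writing $HH \simeq \bigoplus_{t\ge0} HH^{(t)}$. Both of the operations involved, $\R a_*a^*$ and Zariski hypercohomology $\bH^n_\zar$, commute with this (finite per bounded range of $t$) direct sum decomposition, so summing the first isomorphism over $t\ge0$ gives the second.

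There is essentially no genuine obstacle here; the corollary is purely a bookkeeping translation between the language of cdh-hypercohomology of the Hodge-weight pieces of Hochschild homology and the language of Zariski cohomology of Danilov's $\tOmega^t$. The one place to be attentive is the shift convention: the Hodge piece $HH^{(t)}$ is concentrated (for smooth schemes) in homological degree $t$, i.e.\ cohomological degree $-t$, so the shift $[t]$ in $\R a_*a^*\Omega^t[t]$ is what converts the index $n$ on the left into $n+t$ on the right. Once that convention is fixed, the corollary is immediate from Theorem \ref{Thm1}.
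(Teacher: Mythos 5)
Your proposal is correct and matches the paper's own (unwritten) argument: the corollary is indeed the formal translation of Theorem \ref{Thm1} through the identification $\bH_\cdh(X,HH^{(t)})\simeq \R a_*a^*\Omega^t[t]$ cited just above it, with the shift $[t]$ accounting for the index change from $n$ to $n+t$. The finite-per-degree Hodge decomposition and its compatibility with $\R a_*a^*$ and hypercohomology then give the direct-sum formula, exactly as you say.
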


The Hochschild homology in \ref{HH/k} is taken over any field $k$ of
characteristic zero. Since every toric variety $X=X_k$ over $k$ is
obtained by base-change from a toric variety $X_{\Q}$
over the ground field $\Q$, flat base-change yields
$\Omega^*_{X/k}\cong\Omega^*_{X_\Q/\Q}\otimes_{\Q} k$, and
the K\"unneth formula yields
$\Omega^*_{X/\Q}=\Omega^*_{X_\Q/k}\otimes_\Q\Omega^*_{k/\Q}
= \Omega^*_{X/k}\otimes_k\Omega^*_{k/\Q}$.
Similar formulas hold for $HH_*(X/\Q)$ and hence for
$\bH^*_\cdh(X,HH(-/\Q))$.

We define $\tOmega^t_{X/\Q}$ to be $j_*\Omega^t_{X/\Q}$. The above
remarks imply that $\tOmega^t_{X}\cong \tOmega^t_{X_\Q/\Q}\otimes_{\Q} k$,
and that there is also a K\"unneth formula
$\tOmega^*_{X/\Q}\cong\tOmega^*_{X}\otimes_k\Omega^*_{k/\Q}$.

Hence we have have the following variant of the previous corollary.

\begin{cor}\label{HH/Q}
For every toric $k$-variety $X$,
\[
\bH^{n}_\cdh(X,HH^{(t)}(-/\Q))\cong H_\zar^{t+n}(X,\tOmega_{X/\Q}^t)
\cong
\bigoplus_{i+j=t}H_\zar^{t+n}(X,\tOmega_{X}^i)\otimes_k\Omega^j_{k/\Q},
\] and
$$
\bH^{n}_\cdh(X,HH(-/\Q)) \cong
\bigoplus_{t\ge0}H_\zar^{t+n}(X,\tOmega^t_{X/\Q}).
$$
\end{cor}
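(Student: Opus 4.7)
The plan is to derive the corollary from Corollary \ref{HH/k} together with the K\"unneth formulas recorded in the two paragraphs immediately preceding the statement; no new geometric input is required. I would begin with the rightmost isomorphism of the first display, which is formal: applying $H^{t+n}_\zar(X,-)$ to the K\"unneth decomposition $\tOmega^t_{X/\Q}\cong\bigoplus_{i+j=t}\tOmega^i_X\otimes_k\Omega^j_{k/\Q}$ and using that each $\Omega^j_{k/\Q}$ is a free (hence flat) $k$-module extracts the tensor factor from Zariski cohomology.

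For the leftmost isomorphism, I would expand the $cdh$-sheafified Hochschild complex via its Hodge-weighted K\"unneth decomposition,
\[
HH^{(t)}(X/\Q)\;\simeq\;\bigoplus_{i+j=t}HH^{(i)}(X/k)\otimes_k HH^{(j)}(k/\Q),
\]
where HKR applied to the smooth field extension $k/\Q$ identifies $HH^{(j)}(k/\Q)$ with $\Omega^j_{k/\Q}$ in homological degree $j$, i.e.\ cohomologically with $\Omega^j_{k/\Q}[j]$. After applying $\bH^n_\cdh(X,-)$, pulling the flat $k$-module $\Omega^j_{k/\Q}$ out of hypercohomology, and absorbing the cohomological shift by $j$, the decomposition becomes
\[
\bH^n_\cdh(X,HH^{(t)}(-/\Q))\;\cong\;\bigoplus_{i+j=t}\bH^{n+j}_\cdh(X,HH^{(i)}(-/k))\otimes_k\Omega^j_{k/\Q}.
\]
Corollary \ref{HH/k} identifies the $(i,j)$ summand with $H^{i+(n+j)}_\zar(X,\tOmega^i_X)\otimes_k\Omega^j_{k/\Q}=H^{t+n}_\zar(X,\tOmega^i_X)\otimes_k\Omega^j_{k/\Q}$, since $i+j=t$, which is precisely the right-hand side of the previous display. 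The second display of the corollary follows by summing the first over $t\ge 0$.

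The main technical point is to verify that the K\"unneth decomposition of $HH(X/\Q)$ is compatible with the Hodge weight grading, under the convention that $\Omega^j_{k/\Q}$ carries Hodge weight $j$; this is standard in characteristic zero. Once granted, the cohomological shift by $j$ contributed by each Hodge-weighted K\"unneth factor is exactly what converts Corollary \ref{HH/k}'s cohomological index $i$ into the desired $t=i+j$, so the whole proof reduces to bookkeeping.
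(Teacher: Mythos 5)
Your proof is correct and follows essentially the same route as the paper: the K\"unneth decomposition (over the $\Q$-model, as the paper sets up in the preceding paragraphs) of $HH(-/\Q)$ and of $\tOmega^*_{X/\Q}$, compatibility with Hodge weights, HKR for $k/\Q$, flatness of $\Omega^j_{k/\Q}$, and Corollary~\ref{HH/k}. You have merely made explicit the degree-shift bookkeeping that the paper leaves implicit in the phrase ``hence we have the following variant.''
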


\medskip\goodbreak
\section{$K$-theory and cyclic homology of toric varieties}

Recall from section \ref{tOmega} that $\tOmega^p_X$ has both a combinatorial
definition, and an interpretation as $j_*\Omega^p_U$ where
$j:U\hookrightarrow X$ is the inclusion of the smooth locus.
In this section, we study the exterior differentiation map
$d: \tOmega^p_X \to \tOmega^{p+1}_X$ which arises as the pushforward
of the de Rham differential $d:\Omega^p_U \to \Omega^{p+1}_U$.
The following combinatorial description of this map is useful.

\begin{lem} (\cite[4.4]{Dani})
The map $d: \tOmega^p_X \to \tOmega^{p+1}_X$ induced by exterior
differentiation $d:\Omega^p_U \to \Omega^{p+1}_U$ is the
$M$-graded map which in weight $m$ is
$k\chi^m\otimes(m_1\wedge\cdots)\mapsto
k\chi^m\otimes(m\wedge m_1\wedge\cdots)$. That is, it is induced by:
\[
(\cO_X(U_\sigma)_m \otimes_\Z \wedge^pM) \cong \wedge^pM ~\map{m\wedge-}~
\wedge^{p+1}M \cong (\cO_X(U_\sigma)_m \otimes_\Z \wedge^{p+1}M).
\]
\end{lem}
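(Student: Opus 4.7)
The plan is to reduce the statement to a computation on the smooth locus and then translate it into logarithmic differentials. Both $\tOmega^p_X$ and $\tOmega^{p+1}_X$ are $M$-graded on each $U_\sigma$, and the exterior derivative $d$ commutes with the $T_N$-action, hence is $M$-homogeneous; thus it suffices to verify the claim on each weight $m$ summand over each affine $U_\sigma$. Since $\tOmega^p_X = j^{(1)}_*\Omega^p_{X^{(1)}}$ and $d$ is by definition the pushforward of the de Rham differential from the smooth locus, I would replace $U_\sigma$ by its intersection with the 1-skeleton $X^{(1)}$, on which every cone is a ray and hence nonsingular.

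On such a nonsingular open set, Remark~\ref{log-poles} identifies $\cO\otimes\wedge^pM$ with the sheaf $\Omega^p(\log D)$ of log-differentials, via the rule $1\otimes m\mapsto d\log\chi^m = d\chi^m/\chi^m$. Under this identification, the weight $m$ element $\chi^m\otimes(m_1\wedge\cdots\wedge m_p)$ of $\tOmega^p(U_\sigma)_m$ corresponds to the differential form
\[
\chi^m\cdot d\log\chi^{m_1}\wedge\cdots\wedge d\log\chi^{m_p}.
\]

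Applying the de Rham differential and using $d^2=0$ to kill each $d\log\chi^{m_i}$, the Leibniz rule gives
\[
d\bigl(\chi^m\, d\log\chi^{m_1}\wedge\cdots\wedge d\log\chi^{m_p}\bigr)
 = d\chi^m\wedge d\log\chi^{m_1}\wedge\cdots\wedge d\log\chi^{m_p}.
\]
Since $d\chi^m = \chi^m\cdot d\log\chi^m$, this becomes $\chi^m\cdot d\log\chi^m\wedge d\log\chi^{m_1}\wedge\cdots\wedge d\log\chi^{m_p}$, which translates back to $\chi^m\otimes(m\wedge m_1\wedge\cdots\wedge m_p)$. This is exactly the claimed formula. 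For well-definedness in the target, note that $m\in\sigma(m)^\perp$ by the defining condition $\langle m,n\rangle=0$ for $n\in\sigma(m)$, while the $m_i$ lie in $M\cap\sigma(m)^\perp$ by \eqref{E1}, so the wedge $m\wedge m_1\wedge\cdots\wedge m_p$ belongs to $\wedge^{p+1}(M\cap\sigma(m)^\perp)$, which is the weight $m$ component of $\tOmega^{p+1}(U_\sigma)$.

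There is no real obstacle here: the identification $\tOmega^p_X=j^{(1)}_*\Omega^p_{X^{(1)}}$ recorded after Remark~\ref{log-poles} does all the heavy lifting, since once the formula is verified on $X^{(1)}$ it extends uniquely by pushforward. The only small point to keep track of is the identification of the $M$-grading under the $d\log$ isomorphism, but this is transparent because each $d\log\chi^{m_i}$ has weight $0$ and each scalar $\chi^m$ is of pure weight $m$.
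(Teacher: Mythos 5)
The paper does not prove this lemma at all: it is stated as a citation to Danilov~\cite[4.4]{Dani}, so there is no in-paper argument to compare against. Your proof is a correct and natural way to supply the missing argument, and it is almost certainly the intended one: reduce to the open piece $X^{(1)}$ via $\tOmega^p_X = j^{(1)}_*\Omega^p_{X^{(1)}}$, use the identification with log differentials from Remark~\ref{log-poles}, and compute $d\bigl(\chi^m\,d\log\chi^{m_1}\wedge\cdots\wedge d\log\chi^{m_p}\bigr)=\chi^m\,d\log\chi^m\wedge d\log\chi^{m_1}\wedge\cdots\wedge d\log\chi^{m_p}$ using the Leibniz rule, closedness of each $d\log\chi^{m_i}$, and $d\chi^m=\chi^m\,d\log\chi^m$; translating back gives $\chi^m\otimes(m\wedge m_1\wedge\cdots\wedge m_p)$. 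Your remark on well-definedness of the target (that $m\in\sigma(m)^\perp$ and the $m_i\in M\cap\sigma(m)^\perp$, so the wedge lands in $\wedge^{p+1}(M\cap\sigma(m)^\perp)$, in agreement with~\eqref{E1}) is a useful sanity check and is correct. One small streamlining: since $\tOmega^p_X\subset j_*\Omega^p_{T_N}$, you could equally well verify the formula just on the dense torus $T_N=\orb(0)$, where every $d\log\chi^{m_i}$ is an honest free generator of $\Omega^1$; the divisor $D$ and the log-pole structure are not strictly needed for the calculation, only for identifying the image.
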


Pushing forward the de Rham complex $\Omega^*_U$, we see that the
$\tOmega^p_X$'s fit together to form a ``log de Rham'' complex $\tOmega^*_X$
on $X$. There is a natural map $\Omega^*_X \to \tOmega^*_X$ of
complexes, which is an isomorphism on the smooth locus of $X$.
Similarly,
pushing forward the de Rham complex $\Omega^*_{U/\Q}$ from the smooth
locus to $X$, we obtain a log de Rham complex $\tOmega^*_{X/\Q}$.

As in \cite{CHSW} and \cite{CHW},
$\bH_\cdh(X,HC)$ denotes $\R a_*a^*$ applied to the cyclic homology
cochain complex, and $\bH_\cdh(X,HC^{(t)})$ is its summand in Hodge
weight $t$. The Zariski hypercohomology of these complexes is written as
$\bH^*_\cdh(X,HC)$ and $\bH^*_\cdh(X,HC^{(t)})$, respectively,
and is called the {\it $cdh$-fibrant cyclic homology} of $X$.

By \cite[2.2]{CHW},
$\bH_\cdh(X,HC^{(t)}) \cong \R a_*a^*\Omega^{\le t}[2t]$,
where $\Omega^{\le t}$ denotes the brutal truncation of the de Rham complex.
Similarly, we write $\tOmega_X^{\leq t}$ for the brutal truncation of
the Danilov complex $\tOmega^*_X$. By Theorem \ref{Thm1},
$\bH_\cdh(X,HC^{(t)})\cong \tOmega_X^{\leq t}[2t]$.

As with Hochschild homology, the cyclic homology in the above paragraph
is taken over $k$. As in the previous section, we may also consider cyclic
homology taken over the ground field $\Q$, and we also have
$\bH_\cdh(X,HC^{(t)}(-/\Q)) \cong \R a_*a^*\Omega^{\le t}_{/\Q}[2t]$,
again by \cite[2.2]{CHW}.

Again by Theorem \ref{Thm1},
we have an isomorphism in the derived category:
$$\R a_*a^*\Omega^{\le t}_{/\Q} \simeq \tOmega^{\le t}_{X/\Q}.$$
Concatenating these identifications, we have:

\begin{prop} \label{cdhHC}
If $X$ is a toric $k$-variety, %
the $cdh$-fibrant cyclic %
homology is given by the formula:
\[
\bH^{-n}_\cdh(X,HC) \cong
\bigoplus\nolimits_{t\ge0} H_\zar^{2t-n}(X,\tOmega_X^{\le t}).
\]
and
$$
\bH_\cdh^{-n}(X, HC(-/\Q)) \cong
\bigoplus\nolimits_{t \geq 0} H_\zar^{2t-n}(X,\tOmega^{\le t}_{X/\Q}).
$$
\end{prop}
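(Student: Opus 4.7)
The proof is essentially an assembly of results already stated in the paragraphs preceding the proposition, so the plan is to spell out how they fit together. First, I would invoke the Hodge decomposition of cyclic homology, which gives
\[
\bH_\cdh(X, HC) \;\simeq\; \bigoplus_{t\ge 0} \bH_\cdh(X, HC^{(t)})
\]
and similarly for the version relative to $\Q$. This reduces the statement to a calculation of each Hodge summand separately.

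Next, I would invoke \cite[2.2]{CHW} to identify $\bH_\cdh(X, HC^{(t)}) \simeq \R a_* a^* \Omega^{\le t}[2t]$, together with the analogous identification for the relative variant. At this point the heart of the argument is to identify $\R a_* a^* \Omega^{\le t}$ with $\tOmega^{\le t}_X$ in the derived category of Zariski sheaves on $X$ (and $\R a_* a^* \Omega^{\le t}_{/\Q}$ with $\tOmega^{\le t}_{X/\Q}$). This is the sheaf-level upgrade of Theorem \ref{Thm1}: the isomorphisms $\R a_* a^* \Omega^p|_X \simeq \tOmega^p_X$ produced in the proof of Theorem \ref{Thm1} are compatible with the exterior derivative, because on the smooth toric hypercover $Y_\mathdot \to X$ these isomorphisms come from the identification $\Omega^p_{Y_\mathdot} \simeq \tOmega^p_{Y_\mathdot}$ (Remark \ref{log-poles}), which is the identity on forms, and the de Rham differentials on $\Omega^*_U$ and on $\tOmega^*_X$ are compatible under pushforward. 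Hence the brutal truncations are quasi-isomorphic.

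Once that identification is in hand, taking Zariski hypercohomology and using the shift by $[2t]$ gives
\[
\bH^{-n}_\cdh(X, HC^{(t)}) \;\cong\; H^{2t-n}_\zar(X, \tOmega^{\le t}_X),
\]
and summing over $t$ yields the first formula. The second formula is obtained by running the same argument with $\Omega^{\le t}_{/\Q}$ and $\tOmega^{\le t}_{X/\Q}$ throughout, appealing to the $/\Q$ variant of \cite[2.2]{CHW} at the first step and to the derived-category isomorphism $\R a_* a^* \Omega^{\le t}_{/\Q} \simeq \tOmega^{\le t}_{X/\Q}$ noted in the text just before the proposition.

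The only genuine technical point is the compatibility with differentials needed to pass from the sheaf-level statement of Theorem \ref{Thm1} to the complex-level statement for $\Omega^{\le t}$; once that is granted, the proof is a matter of concatenation. I expect this compatibility to be transparent from the explicit description of the quasi-isomorphisms via the smooth toric hypercover, so the main obstacle is purely bookkeeping rather than a substantive new argument.
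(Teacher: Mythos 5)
Your proof is correct and follows essentially the same route as the paper: Hodge decomposition, then \cite[2.2]{CHW} to pass to $\R a_*a^*\Omega^{\le t}[2t]$, then the derived-category identification with $\tOmega^{\le t}_X[2t]$ coming from Theorem~\ref{Thm1}, and finally take hypercohomology and sum over $t$. You are in fact a bit more careful than the paper's own exposition in flagging and justifying the compatibility with the exterior derivative needed to upgrade the $p$-by-$p$ statement of Theorem~\ref{Thm1} to an isomorphism of truncated complexes.
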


\begin{ex}\label{HC0-cdh}
The case $t=0$ of \ref{cdhHC} yields the formula
$$HC_n^{(0)}(X)= H_\zar^{-n}(X,\cO)\map{\simeq}
H_\cdh^{-n}(X,\cO) = \bH_\cdh^{-n}(X,HC^{(0)}).
$$
This illustrates the interconnections between
the case $p=0$ of Theorem \ref{Thm1}, Danilov's calculation in
Remark \ref{ratsing}, and the convention that $\tOmega^0_X=\cO_X$.
\end{ex}

These calculations tell us about the algebraic $K$-theory of toric varieties,
via the following translation of \cite[1.6]{CHW}
into the present language.

\begin{defn}
Let $\cF_{HC}[1]$ denote the mapping cone complex of
$HC(-/\Q)\to\R a_*a^*HC(-/\Q)$; the indexing we use is such that there is a
long exact sequence:
\[ \cdots \to
H^{-n}(X,\cF_{HC})\to HC_n(X/\Q) \to \bH_\cdh^{-n}(X, HC(-/\Q)) \to\cdots.
\]
\end{defn}

\begin{thm}\label{FKFHC} (\cite[1.6]{CHW})
For every $X$ in $Sch/k$, there is a long exact sequence
$$\cdots\to KH_{n+1}(X) \to
H^{-n}_\zar(X,\cF_{HC}[1])
\to K_n(X)\to KH_n(X) \to\cdots.
$$
\end{thm}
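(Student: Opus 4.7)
The plan is essentially to invoke the main result of \cite{CHW}, but I will sketch the underlying mechanism. The starting point is the Corti\~nas--Haesemeyer--Schlichting--Weibel theorem from \cite{CHSW}, which, in characteristic zero, establishes that infinitesimal $K$-theory agrees with a suitable negative cyclic homology theory; this yields a homotopy cartesian square relating algebraic $K$-theory, homotopy $K$-theory, cyclic homology over $\Q$, and its $cdh$-fibrant replacement. Concretely, the first step is to produce, for every $X$ in $Sch/k$, a commutative square
$$
\begin{CD}
K(X) @>>> KH(X) \\
@VVV @VVV \\
HC(X/\Q)[1] @>>> \R a_* a^* HC(-/\Q)[1]|_X
\end{CD}
$$
(with an appropriate shift convention) and to show that it is homotopy cartesian.

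Second, the $cdh$-descent result for $KH$ (essentially Haesemeyer's theorem) identifies the right column with the map $KH \to \R a_* a^* KH$, which is an equivalence. Combined with the CHSW identification of the fiber of $K \to K^{\inf}$ with (a shift of) $HC(-/\Q)$, this reduces the computation of the homotopy fiber of $K(X) \to KH(X)$ to the homotopy fiber of $HC(X/\Q) \to \R a_*a^*HC(-/\Q)|_X$. By the definition of $\cF_{HC}$, the latter fiber is precisely $\cF_{HC}$, with the shift chosen so that its Zariski hypercohomology in degree $-n$ fits into the long exact sequence
$$
\cdots \to H^{-n}(X,\cF_{HC}) \to HC_n(X/\Q) \to \bH_\cdh^{-n}(X,HC(-/\Q)) \to \cdots.
$$

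Third, I would take Zariski hypercohomology of the induced fiber sequence
$$
\cF_{HC}[1] \to K \to KH
$$
on $X_\zar$ and read off the stated long exact sequence, paying attention to the degree shift that turns $\cF_{HC}$ into $\cF_{HC}[1]$ in the $K$-theory sequence. The translation from the CHSW framework (phrased in terms of negative cyclic and infinitesimal $K$-theory) to the present formulation using $cdh$-fibrant $HC$ is exactly the content of \cite[1.6]{CHW}.

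The main obstacle is the first step: establishing the homotopy cartesian square, which ultimately rests on the full force of the nilinvariance properties of $K^{\inf}/HC$ in characteristic zero from \cite{CHSW}. Once that square is in hand, everything else is formal manipulation of fiber sequences and the bookkeeping for indexing conventions. Since the statement is an application of \cite[1.6]{CHW}, no new argument is required here.
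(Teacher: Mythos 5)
The paper offers no independent proof of this theorem; it is stated as a direct citation of \cite[1.6]{CHW}, and your proposal correctly recognizes this while giving a faithful sketch of the mechanism behind that result (the homotopy cartesian square coming from the CHSW/Corti\~nas identification of the fiber of $K\to K^{\inf}$ with a shift of $HC(-/\Q)$, together with Haesemeyer's $cdh$-descent for $KH$). Your treatment matches the paper's own approach.
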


For toric varieties, the sequence \eqref{FKFHC} splits:

\begin{prop}\label{Willies}
For every toric variety $X$,
$K_*(X)\to KH_*(X)$ is a split surjection. Hence
$$
K_n(X)\cong KH_n(X) \oplus H^{-n}_\zar(X,\cF_{HC}[1]).
$$
\end{prop}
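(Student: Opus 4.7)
The decomposition $K_n(X) \cong KH_n(X) \oplus H^{-n}_\zar(X, \cF_{HC}[1])$ follows formally from the splitting surjectivity of $K_*(X)\to KH_*(X)$, because the long exact sequence of Theorem \ref{FKFHC} then breaks into short exact sequences each split by the chosen section $s: KH_*(X) \to K_*(X)$. Hence the task reduces to producing such a section.

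I would proceed by induction on the complexity of the fan $\Delta$ (measured, say, by the number of subdivisions needed to resolve singularities). For the base case where $X=X(\Delta)$ is smooth, one has $\cF_{HC}[1]\simeq 0$, because $HC$ coincides with its $cdh$-fibrant version for smooth varieties, so $K_*(X) \cong KH_*(X)$ canonically. For the inductive step, choose an equivariant blow-up square as in \eqref{E2} obtained by subdividing $\Delta$ by a single ray $\rho$; the resulting varieties $X', V, V'$ have strictly simpler fans. Applying Theorem \ref{FKFHC} to all four varieties, together with the Mayer-Vietoris long exact sequence for $KH$ (a formal consequence of cdh-descent) and the Mayer-Vietoris sequence for $\cF_{HC}[1]$ (obtained by applying Lemma \ref{orbitBU} Hodge-weight by Hodge-weight, via Corollary \ref{HH/Q}), one obtains a commutative ladder of long exact sequences. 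By the inductive hypothesis the three ``simpler'' vertices $X', V, V'$ carry compatible splittings, and a diagram chase should produce the splitting for $X$.

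\textbf{Main obstacle.} The principal difficulty is that $K$-theory does not satisfy cdh-descent, and the failure is measured precisely by $\cF_{HC}[1]$; so combining the Mayer-Vietoris sequences for $K$, $KH$, and $\cF_{HC}[1]$ in the blow-up square requires care. One must verify that the identifications of Theorem \ref{Thm1} and Corollary \ref{HH/Q} are functorial with respect to the pullback $\pi^*$ and the closed-immersion restrictions appearing in \eqref{E2}, so that the three long-exact-sequence ladders really do commute, and then that the inductive section on the simpler vertices lifts uniquely and compatibly to $X$. This naturality is essentially the naturality assertion already recorded in Theorem \ref{Thm1}, so once it is propagated through the Hodge-weight decomposition of $\cF_{HC}[1]$, the diagram chase — or, better, a spectrum-level version of the same construction — should go through without further incident.
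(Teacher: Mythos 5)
Your plan has a genuine gap, and the paper's actual argument is entirely different.

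The inductive step requires a Mayer--Vietoris long exact sequence for $K$-theory (or, equivalently, for $\cF_{HC}$) associated to the abstract blow-up square \eqref{E2}. But $K$-theory does \emph{not} satisfy $cdh$-descent: the failure of Mayer--Vietoris for abstract blow-ups is exactly what $\cF_{HC}$ measures. Applying Lemma \ref{orbitBU} Hodge-weight by Hodge-weight, as you suggest, produces a Mayer--Vietoris sequence for the $cdh$-fibrant target $\bH_\cdh(-,HC)$, not for $\cF_{HC}$ itself; to get one for $\cF_{HC}$ you would also need one for the source $HC(-/\Q)$, and cyclic homology of singular toric varieties does not satisfy Mayer--Vietoris for blow-ups (otherwise $\cF_{HC}$ would be trivial and there would be nothing to prove). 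So the ladder you want to chase does not exist. Even setting that aside, a diagram chase in a ladder of long exact sequences does not in general produce a compatible section from sections on the other three corners; you would need the entire construction at the spectrum level, and the proposal does not supply that.

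The paper's proof is shorter and avoids the fan induction entirely. For an affine $U_\sigma$, let $M(\sigma)=M\cap\sigma^\perp$ and $T_\sigma=\Spec k[M(\sigma)]$, the smallest orbit. Evaluation at a suitable rational interior point of $\sigma$ makes $k[\sigma^\vee\cap M]$ an $\N$-graded ring with $k[M(\sigma)]$ in degree zero, so the inclusion $k[M(\sigma)]\hookrightarrow k[\sigma^\vee\cap M]$ is a $KH$-equivalence. Since $T_\sigma$ is regular, $K(T_\sigma)\simeq KH(T_\sigma)\simeq KH(U_\sigma)$, and the composite $K(T_\sigma)\to K(U_\sigma)\to KH(U_\sigma)$ exhibits a section. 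This construction is natural in $\sigma$ with respect to face inclusions, so taking the homotopy limit over the fan $\Delta$ yields a section of $K(X)\to KH(X)$. The formal deduction of the direct-sum decomposition from Theorem \ref{FKFHC}, which you state at the start, is of course correct.
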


\begin{proof}
For each affine cone $\sigma$, $M(\sigma) := M\cap\sigma^\perp$ is a
free abelian monoid, so $T_\sigma=\Spec(k[M(\sigma)])$ is a torus.
We first claim that the inclusion
$i_\sigma: k[M(\sigma)]\into k[M\cap\sigma^\vee]$, or surjection
$U_\sigma\onto T_\sigma$, induces an isomorphism on $KH$-theory, \ie

\addtocounter{equation}{-1}
\begin{subequations}
\begin{equation}\label{eq:KHT}
K(T_\sigma) \map{\simeq} KH(T_\sigma)\map{\simeq} KH(U_\sigma).
\end{equation}

Since \eqref{eq:KHT} factors
$K(T_\sigma)\to K(U_\sigma)\to KH(U_\sigma)$,
this proves the lemma for $U_\sigma$.

Because $T_\sigma$ is regular, the first map is an isomorphism.
For a suitable rational $n\in\sigma$, evaluation at $n$ is a monoid
map from $M\cap\sigma^\vee$ to $\N$ with kernel $M(\sigma)$.

This gives $k[M\cap\sigma^\vee]$ the structure of an $\N$-graded algebra
with $k[M(\sigma)]$ in degree zero. Hence $i_\sigma$ induces an isomorphism
$KH(k[M(\sigma)])\cong KH(k[M\cap\sigma^\vee])$, as claimed.

If $\tau$ is a face of $\sigma$, we have a commutative diagram
\begin{equation*}\begin{CD}
k[M(\sigma)] @>>>k[M\cap\sigma^\vee] \\ @V\text{into}VV @VV\text{into}V \\
 k[M(\tau)] @>>> k[M\cap\tau^\vee].
\end{CD}\end{equation*}
Thus the isomorphism in \eqref{eq:KHT} is natural in $\sigma$, for
$\sigma$ a face of a fan $\Delta$,
and so is the splitting of $K(U_\sigma)\to KH(U_\sigma)$.
Since $K(X)$ is the homotopy limit over $\Delta$ of the $K(U_\sigma)$,
and similarly for $KH(X)$, the homotopy limit of the splittings
provides a splitting of the map $K(X)\to KH(X)$.
\end{subequations}
\end{proof}

\begin{subremark}
The proof amounts to the observation that there is an algebraic homotopy
from $U_\sigma$ onto its smallest orbit $orb(\sigma)$, and that this
homotopy is natural with respect to face inclusions.
\end{subremark}

The sequence \eqref{FKFHC} is compatible with the decomposition
arising from the Adams operations because the Chern character is,
by \cite{chwinf}.
Thus $K^{(i)}_*(X)$ and $KH^{(i)}_*(X)$
fit into a long exact sequence with $\cF_{HC}^{(i-1)}$.
For example, it is immediate from Example \ref{HC0-cdh} that
$\cF_{HC}^{(0)}(X)$ is acyclic, proving that
$K^{(1)}_*(X)\cong KH^{(1)}_*(X)$ for toric varieties. The case $*=0$,
which is a well known assertion about the Picard group of normal varieties,
has the following extension:

\begin{prop}\label{K0}
If $X = U_\sigma$ is an affine toric $k$-variety,
then $K_0(X) = \Z$.
\end{prop}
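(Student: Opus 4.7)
The plan is to invoke Proposition \ref{Willies} to split
\[
K_0(U_\sigma) \cong KH_0(U_\sigma) \oplus H^0_\zar(U_\sigma, \cF_{HC}[1]),
\]
and then compute each summand, showing the first is $\Z$ and the second vanishes.

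For the first summand, I would appeal to the isomorphism \eqref{eq:KHT} constructed inside the proof of Proposition \ref{Willies}: it identifies $KH(U_\sigma)$ with $K(T_\sigma)$, where $T_\sigma = \Spec k[M(\sigma)]$ is a torus. Because $M(\sigma)$ is a free abelian group of some rank $r$, the ring $k[M(\sigma)]$ is a Laurent polynomial algebra $k[t_1^{\pm1},\dots,t_r^{\pm1}]$, and the Bass--Quillen fundamental theorem (applied iteratively, using that $K_{-1}$ of a field vanishes) gives $K_0(k[t_1^{\pm1},\dots,t_r^{\pm1}]) = K_0(k) = \Z$. Hence $KH_0(U_\sigma) = \Z$.

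For the vanishing of the second summand, I would use the long exact sequence defining $\cF_{HC}$:
\[
HC_0(U_\sigma/\Q) \to \bH_\cdh^0(U_\sigma, HC(-/\Q)) \to H^1_\zar(U_\sigma, \cF_{HC}) \to HC_{-1}(U_\sigma/\Q) = 0.
\]
Since $H^0_\zar(U_\sigma, \cF_{HC}[1]) = H^1_\zar(U_\sigma, \cF_{HC})$, it suffices to show the left map is surjective. By Proposition \ref{cdhHC},
\[
\bH_\cdh^0(U_\sigma, HC(-/\Q)) \cong \bigoplus\nolimits_{t \geq 0} H_\zar^{2t}(U_\sigma, \tOmega_{U_\sigma/\Q}^{\leq t}).
\]
Since $U_\sigma$ is affine and the sheaves $\tOmega^p_{U_\sigma/\Q}$ are quasi-coherent, hypercohomology of the bounded complex $\tOmega^{\leq t}_{U_\sigma/\Q}$ (concentrated in degrees $0, \dots, t$) reduces to the cohomology of its complex of global sections, and therefore vanishes in degree $2t$ whenever $t \geq 1$. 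Only the $t=0$ summand contributes, giving $\bH_\cdh^0(U_\sigma, HC(-/\Q)) \cong \cO(U_\sigma)$. On the other hand, $HC_0(U_\sigma/\Q) = \cO(U_\sigma)$, and on this Hodge weight zero piece the natural map is the identity (higher Hodge weight pieces of $HC_0(U_\sigma/\Q)$ simply map to zero), so the map is surjective.

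The main ingredient is the splitting of Proposition \ref{Willies}; once that is in hand the argument reduces, in the affine case, to the standard vanishing of higher cohomology of quasi-coherent sheaves on affine schemes together with Bass--Quillen for the torus, so I do not anticipate a serious obstacle. The genuine content of the proposition has been packaged entirely into the preceding machinery (Propositions \ref{Willies} and \ref{cdhHC}), which is what makes this a short proof of Gubeladze's theorem.
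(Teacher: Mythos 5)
Your proposal is correct and follows essentially the same route as the paper: split off $KH_0 = \Z$ using Proposition \ref{Willies}, then reduce the vanishing of the $\cF_{HC}$ term (via the defining long exact sequence and $HC_{-1}=0$) to surjectivity of $HC_0 \to \bH^0_\cdh(X,HC)$, which follows from Proposition \ref{cdhHC} together with affine vanishing of higher Zariski cohomology of quasi-coherent complexes. The only differences are cosmetic: you are more explicit about $KH_0(U_\sigma)=K_0(T_\sigma)=\Z$ via Bass--Quillen, and about the shift $H^0(X,\cF_{HC}[1])=H^1(X,\cF_{HC})$, while the paper compresses these steps (and implicitly uses the rational-singularities fact of Remark \ref{ratsing}, via Example \ref{HC0-cdh}, to get that the weight-$0$ map is an isomorphism rather than literally ``the identity'').
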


\begin{proof}
Note that the coordinate ring of $U_\sigma$ is graded, so $KH_0(X)=\Z$.

By
\ref{FKFHC}, we need to show that $\bH^0(X,\cF_{HC})=0$.
Since $HC_{-1}(X)=0$, we are reduced to proving that the map
$$
HC_0(X) \to \bH_\cdh^0(X, HC)
$$
is onto. By \ref{cdhHC}, the target of this map is
$
\bigoplus_{t\ge0} H_\zar^{2t}(X, \tOmega^{\le t}_{/\Q}).
$
Since $X$ is affine, we have
$H_\zar^{2t}(X, \tOmega^{\le t}_{/\Q}) = 0$ for all $t >0$.
Finally, when $t=0$ we have
\[
H_\zar^0(X, \tOmega^{\leq 0}_{/\Q}) = H_\zar^0(X, \cO_X) = HC_0(X).
\qedhere \]
\end{proof}

\begin{subremark} 
A much better version of this Corollary was proven years ago by
  Gubeladze \cite{Gu88}: For a PID $R$, every finitely projective module
  over $R[A]$, where $A$ is a semi-normal, abelian, cancellative
  monoid without non-trivial units, is free. This was extended to the
  case where $R$ is regular by Swan \cite{Swan}.
\end{subremark}
\smallskip\goodbreak

Of course, the dictionary coming from \cite{CHW} via \ref{FKFHC}
also allows us to say something about the
higher $K$-theory of toric varieties. Let $K_n^{(i)}(X)$
denote the weight $i$ part of $K_n(X)\otimes\Q$
with respect to the Adams operations, \ie the eigenspace where
$\psi^k=k^i$ for all $k$. We adopt the parallel notation
$KH^{(i)}_n(X)$ for the weight $i$ part of $KH_n(X)$.

The absolute cotangent sheaf $\bL_X$ of $X/\Q$ has
$\bL_X^{\ge0}=\Omega^1_{X/\Q}$ and $H^{1-n}(X,\bL_X)=HH_{n}^{(1)}(X/\Q)$;
see \cite[8.8.9]{WHomo}. There is a natural map
$\bL_X\to\Omega^1_{X/\Q}\to\tOmega^1_{X/\Q}$.

\begin{cor} \label{cor3}
For any toric $k$-variety $X$, we have a distinguished triangle
\[
\cF^{(1)}_{HC} \to \bL_X \to \tOmega^1_{X/\Q} \to \cF^{(1)}_{HC}[1],
\]
and hence an isomorphism
$K_q^{(2)}(X) \cong KH_q^{(2)}(X) \oplus
H_\zar^{2-q}(X, \bL_X \to \tOmega^1_{X/\Q}).$
\end{cor}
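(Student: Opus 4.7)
The plan is in two steps: (i) extract the weight-$(2)$ summand from the splitting in Proposition \ref{Willies} applied to the exact sequence of Theorem \ref{FKFHC}, and (ii) identify $\cF^{(1)}_{HC}[1]$ concretely using the derived Hodge filtration on cyclic homology over $\Q$.

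For step (i), I would note that the long exact sequence of Theorem \ref{FKFHC} respects Adams operations because the Chern character does (\cite{chwinf}), with the weight pairing matching $K^{(i)}_*$ against $\cF^{(i-1)}_{HC}$. The splitting of Proposition \ref{Willies} is Adams-natural, arising from the equivariant algebraic homotopies $U_\sigma\onto\orb(\sigma)$ noted in the Subremark after that proposition. Projecting to weight $2$ then gives
\[
K_q^{(2)}(X)\;\cong\;KH_q^{(2)}(X)\;\oplus\;H^{-q}_\zar(X,\,\cF^{(1)}_{HC}[1]).
\]

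For step (ii), $\cF^{(1)}_{HC}[1]$ is by definition the cone of the cdh-sheafification map
\[
HC^{(1)}(X/\Q)\;\lra\;\R a_*a^*HC^{(1)}(-/\Q)|_X.
\]
Proposition \ref{cdhHC} identifies the target in weight $1$ over $\Q$ with $\tOmega^{\le1}_{X/\Q}[2]=[\cO_X\to\tOmega^1_{X/\Q}][2]$. For the source I would invoke the derived Hodge decomposition of cyclic homology of $\Q$-algebras (the singular analogue of \cite[2.2]{CHW}): the weight-$1$ summand $HC^{(1)}(X/\Q)$ is quasi-isomorphic to a shift of $\bL\Omega^{\le1}_{X/\Q}=[\cO_X\to\bL_X]$, where $\bL_X=\bL_{X/\Q}$ is the absolute cotangent complex (in the cohomological normalization of \cite[8.8.9]{WHomo}). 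The sheafification map respects the Hodge filtration termwise; on the $\cO_X$-piece it is the unit $\cO_X\to\R a_*a^*\cO_X$, a quasi-isomorphism because toric varieties have rational singularities (Remark \ref{ratsing}, cf.\ Example \ref{HC0-cdh}). Hence the $\cO_X$ pieces cancel, and the cone reduces to (a shift of) the cone of the natural map $\bL_X\to\tOmega^1_{X/\Q}$, namely the composition $\bL_X\to\Omega^1_{X/\Q}\to\tOmega^1_{X/\Q}$ recorded just before the corollary. After accounting for the $[2]$ shifts present on both sides, this yields the distinguished triangle and the hypercohomology identification $H^{-q}_\zar(X,\cF^{(1)}_{HC}[1])\cong H^{2-q}_\zar(X,\bL_X\to\tOmega^1_{X/\Q})$.

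The main obstacle I anticipate is the derived Hodge identification of $HC^{(1)}(X/\Q)$ for possibly singular toric $X$, together with verifying that the induced map on the weight-$1$ graded piece is the natural one $\bL_X\to\Omega^1_{X/\Q}\to\tOmega^1_{X/\Q}$. Once that derived-algebraic input is in place, the rest is bookkeeping with the long exact sequence and the cohomological shifts inherited from Proposition \ref{cdhHC}.
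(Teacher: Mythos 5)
Your proposal is correct and takes essentially the same approach as the paper: the paper's proof likewise identifies $HC^{(1)}(-/\Q)$ with the mapping cone of $\cO\to\bL_X$ via \cite[9.8.18]{WHomo} (precisely the derived Hodge input you flag as the main obstacle), cancels the $\cO$-terms using Remark \ref{ratsing}, reads off the $cdh$-fibrant target from Proposition \ref{cdhHC}, and then deduces the $K$-theory splitting from Proposition \ref{Willies} together with the Adams-operation compatibility of the sequence in Theorem \ref{FKFHC}. The only difference is cosmetic: you derive the Adams-graded splitting first and then identify $\cF_{HC}^{(1)}$, while the paper establishes the distinguished triangle first and then cites Proposition \ref{Willies} and \cite[1.6]{CHW} for the splitting.
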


\begin{proof}
The Zariski sheaf $HC^{(1)}$ is the mapping cone of $\cO\to\bL_X$;
see \cite[9.8.18]{WHomo}.
Since $\R a_*(a^*\cO)|X=\cO_X$ by Remark \ref{ratsing}, and
$\bH_\cdh(X,HC^{(1)})\simeq(\cO\to\tOmega^1_X)[2]$ by \ref{cdhHC},
it follows that the mapping cone $\cF_{HC}^{(1)}$ of
$HC^{(1)}\to\bH_\cdh(X,HC^{(1)})$ is also the mapping cone of
$\bL_X\to\tOmega^1_X$. This proves the first assertion; the second
assertion follows from this, Proposition \ref{Willies} and
\cite[1.6]{CHW}.
\end{proof}

The techniques of \cite{CHW} allow us to find examples of toric
varieties with ``huge'' $K_0$ and $K_1$ groups, in the spirit of
\cite{W-87}, \cite{Gu95} and \cite{Gu04}.
Our toric varieties will have quotient singularities
because all the cones will be simplices; see \cite{Fulton}.

\begin{ex}\label{hugeK1}
Let $N = \Z^3$, and let us to agree to write elements of $N$ as column
vectors and elements of $M \cong \Z^3$ as row vectors.  Define
$\tau$ to be the cone in the $xy$-plane of $N_\R = \R^3$ spanned by
the vectors $e_1=\vecthree100$ and $e_1+2e_2=\vecthree120$. Then
$U_\tau$ is a singular, affine toric $k$-variety.

In fact, $U_\tau = \Spec \left(k[X,Y,Z]/(YZ - X^2)[T,T^{-1}]\right)$,
where $X = \chi^{(1,0,0)}$, $Y = \chi^{(0,1,0)}$, $Z = \chi^{(2,-1,0)}$
and $T^{\pm1}= \chi^{(0,0,\pm1)}$\!.
This is because $\tau^\vee \cap M$ is generated by the vectors
$(1,0,0), (0,1,0), (2,-1,0)$ and $(0,0,\pm1)$.

Let $m \in M$ be the vector $(1,0,0)$. Its face is
$\tau(m) = \{0\}$, so $\tau(m)^\perp=M$. We see from \eqref{E1} that
$\tOmega^1(U_\tau)_m =
  k\cdot X\otimes M \cong k^3$. The forms $dX$, $XdY/Y$ and $XdT/T$
form a basis.
On the other hand, $\Omega^1(U_\tau)_m$ is the
  $k$-vector space spanned by $\chi^u d(\chi^v)$ with $u,v \in
  \tau^\vee \cap M$ satisfying $u+v = m$.
It is easy to see that the only $u,v\in\tau^\vee \cap M$ satisfying
$u+v =(1,0,0)$ are when $u,v$ is $\{(0,0,-j),(1,0,j)\}$.
Thus $\Omega^1(U_\tau)_m$ is the 2-dimensional vector space spanned by
$dX$ and $XdT/T$.
It follows that $\Omega^1(U_\tau)\to\tOmega^1(U_\tau)$ is not onto
in weight $m$.

Similar reasoning shows that for $m=(1,0,c)$ we also have
$\tOmega^1(U_\tau)_m\cong k^3$ on $T^c\, dX$, $T^cX\, dY/Y$ and
$T^{c-1}X\, dT$, and that $\tOmega^1(U_\tau)_m=\Omega^1(U_\tau)$ for
all other $m$. (It is useful to use the fact that $\Omega^1(U_\tau)$ is a
submodule of $\tOmega^1(U_\tau)$ by \cite{Vasc}.)
Thus $\tOmega^1(U_\tau)/\Omega^1(U_\tau)\cong k[T,T^{-1}]$.
By the K\"unneth formula,
\[
\coker\bigl\{ \Omega^1(U_\tau/\Q)\to\tOmega^1(U_\tau/\Q)\bigr\} \cong
\tOmega^1(U_\tau)/\Omega^1(U_\tau).
\]

As in Proposition \ref{Willies}, it is easy to see that
$KH_*(U_\tau)\cong K_*(k[T,T^{-1}])$. Hence \ref{cor3} implies that
$K_1^{(2)}(U_\tau)$ is isomorphic to a nonzero $k$-vector space:
\[
K_1^{(2)}(U_\tau)\cong H^1_\zar(U_\tau,\Omega^1\to\tOmega^1) \cong
\tOmega^1(U_\tau)/\Omega^1(U_\tau) \cong k[T,T^{-1}].
\]
\end{ex}

\begin{ex}\label{huge}
We now extend the $\tau$ of Example \ref{hugeK1}
to form a fan $\Delta$ consisting of two
$3$-dimensional cones $\sigma_1$, $\sigma_2$ (together with all of
their faces) such that $\sigma_1 \cap \sigma_2 = \tau$. Specifically,
let $\sigma_1$ and $\sigma_2$ be spanned by the two edges of
$\tau$ together with
\[
v_1={\vecthree{-1}{\phantom{-}0}{+1}} \quad\text{and}\quad
v_2={\vecthree{-1}{\phantom{-}0}{-1}},
\]
respectively. Let $X = X(\Delta)$,
so   $X = U_{\sigma_1}\cup U_{\sigma_2}$ and
$U_\tau = U_{\sigma_1}\cap U_{\sigma_2}$.
It follows from \ref{Willies} that $KH_0(X)=\Z\oplus\Z$ and that
$$
K_0(X)\cong\Z^2\oplus H^1_\zar(X,\cF_{HC}).
$$
We will show that the right-hand term is nonzero; since it is a
$k$-vector space, it will follow that $K_0(X)$ contains
the additive group underlying a non-zero $k$-vector space.
Taking $k$ to be uncountable, for example $k=\C$, we see $K_0(X)$ is
uncountable.

Because the singular locus of $X$ is 1-dimensional,
$H^n(X,\bL_X)=H^n(X,\Omega^1_X)$ for $n>0$. By Corollary \ref{cor3},
$$
K_0^{(2)}(X)= H^1_\zar(X,\cF_{HC})=H^2_\zar(X,\Omega^1\to\tOmega^1).
$$
 From the Mayer-Vietoris sequence for the given cover of $X$, and
Proposition \ref{K0}, we see that there is an exact sequence
\quad
$$
\tOmega^1(U_{\sigma_1})/\Omega^1(U_{\sigma_1}) \oplus
\tOmega^1(U_{\sigma_2})/\Omega^1(U_{\sigma_2}) \to
\tOmega^1(U_{\tau})/\Omega^1(U_{\tau}) \to
K_0^{(2)}(X) \to 0.
$$
By Example \ref{hugeK1},
$\tOmega^1(U_{\tau})/\Omega^1(U_{\tau})$ is zero except in weights
$m=(1,0,c)$, $c\in\Z$, where it is spanned by the forms $T^{c}XdY/Y$.
For such $m$, $\tau(m)=\{0\}$.
If $c>0$ then $m\in\sigma_1^\vee$ and the element
$\chi^m dY/Y\in\tOmega^1(U_{\sigma_1})$ maps to $T^cX\,dY/Y\in\tOmega(U_\tau)$.
If $c<0$ then $m\in\sigma_2^\vee$ and  the element
$\chi^m dY/Y\in\tOmega^1(U_{\sigma_1})$ maps to $T^cX\,dY/Y\in\tOmega(U_\tau)$.

We are left with the form $X\,dY/Y$ in weight $m=(1,0,0)$.
Since $m \notin\sigma_i^\vee$ for $i=1,2$, we have
$\tOmega^1(U_{\sigma_1})_m=\tOmega^1(U_{\sigma_2})_m=0$.
This proves that
\[
K_0^{(2)}(X) \cong \tOmega^1(U_{\tau})/\Omega^1(U_{\tau})_{(1,0,0)} \cong k.
\]
\end{ex}

\goodbreak

As in Gubeladze's example of toric varieties with ``huge''
Grothendieck groups in \cite{Gu04}, we can further extend $\Delta$ to obtain
a complete fan consisting of simplicial cones
$\overline{\Delta}$, so that $\overline{X}
= X(\overline{\Delta})$ is a projective closure of $X$ and such that
$Y = X(\overline{\Delta} - \Delta)$ is smooth. Since $Y$ and $X$ form
an open cover of $\overline{X}$, we see that $K_0(\overline{X})$ also
contains the additive group underlying a non-zero $k$-vector space.

\section{Gubeladze's Dilation Theorem}

The main goal of this section is to give a new proof of Gubeladze's
Dilation Theorem \cite{Gu05} for the $K$-theory of monoid rings, which
we obtain in \ref{cor:DT} as a corollary of a version of this
result valid for all toric varieties (Theorem \ref{Thm3}).

For a toric variety $X = X(\Delta)$ with $\Delta$ a fan in $N_\R$ and
integer $c \in \N$, define $\theta_c: X(\Delta) \to X(\Delta)$ to be
the endomorphism of toric varieties induced by the endomorphism of the
lattice $N$ given by multiplication by $c$. If $\sigma \subset N_\R$
is a cone, the map $\theta_c: U_\sigma \to U_\sigma$ of affine toric
$k$-varieties is induced by the ring endomorphism of $k[\sigma^\vee \cap
  M]$ that sends $\chi^m$ to $\chi^{cm}$. That is, this is the map
that raises all monomials to the $c$-th power. Observe that if $k =
\F_p$ and $c = p$, this is precisely the Frobenius endomorphism, and
it useful to think of $\theta_c$ as a generalization of Frobenius that
exists in the category of toric varieties.

Fix a sequence $\fc = (c_1, c_2, \dots)$ of integers with $c_i\ge2$
for all $i$. If $F$ is a contravariant functor from toric varieties to
abelian groups, we define $F^\fc$ by
$$
F(X)^\fc = \varinjlim \left(
F(X) \map{\theta_{c_1}^*} F(X) \map{\theta_{c_2}^*} \cdots \right).
$$
Gubeladze's Dilation Theorem asserts that the natural map $K_*(X)
\to KH_*(X)$ induces an isomorphism $K_*(X)^\fc \to KH_*(X)^\fc$ for
any toric variety $X$.
Our proof of this theorem involves computing $HH_q(X)^\fc$ where
$HH_*$ denotes Hochschild homology.

Fix a cone $\sigma$. As in the proof of Lemma \ref{HH-m},
the chain complex defining the Hochschild homology
of $k[\sigma^\vee \cap M]$ is $\sigma^\vee\cap M$-graded with the weight of
$\chi^{m_0} \otimes \cdots\otimes \chi^{m_p}$ defined to be
$m_0+\cdots+m_p$, and the Hochschild homology groups of $U_\sigma$ are
$\sigma^\vee\cap M$-graded $k[\sigma^\vee\cap M]$-modules. A fortiori, they
are $M$-graded, with zero in weight $m$ if $m \notin \sigma^\vee$.
Since $\theta_c(\chi^{m_0}\otimes\cdots)=\chi^{cm_0}\otimes\cdots$,
$\theta_c$ sends the weight $m$ summand to the weight $cm$ summand.

The Hochschild homology of a non-affine variety is defined by
taking Zariski hypercohomology of the
sheafification of the complex defined just as in the definition of
$HH_*(R)$, but with $\cO_X \otimes_k \cdots \otimes_k \cO_X$ in place
of $R \otimes_k \cdots \otimes_k R$ (see \cite[4.1]{WG}).

For a toric variety $X = X(\Delta)$, we may compute $HH_*(X)$
as follows:
Let $\sigma_1,\dots,\sigma_m$ denote the maximal cones in the fan $\Delta$.
For each $1 \leq i_0 \leq \cdots \leq i_p \leq m$, we may form the complex
defining the Hochschild homology of the affine toric variety
$U_{\sigma_{i_0} \cap \cdots \cap \sigma_{i_p}}$. We then assemble
these into a bicomplex in the usual {\v C}ech manner and take the homology of
the associated total complex.

\begin{lem}\label{Mgraded}
For any toric variety $X=X(\Delta)$,
the groups $HH_*(X)$ have a natural $M$-grading, and the endomorphism
$\theta_c$ maps the weight $m$ summand to the weight $cm$ summand.
\end{lem}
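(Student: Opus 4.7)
The plan is to reduce the statement to the affine case already handled in the proof of Lemma \ref{HH-m}, and then to verify that the gradings are compatible with the \v{C}ech assembly used to define $HH_*(X)$.

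First I would record the affine building block. For a maximal cone $\sigma$ (or more generally any $\sigma\in\Delta$), the Hochschild complex
$
k[\sigma^\vee\cap M]^{\otimes p+1}
$
is $\sigma^\vee\cap M$-graded, with $\chi^{m_0}\otimes\cdots\otimes\chi^{m_p}$ in weight $m_0+\cdots+m_p$, and in particular is $M$-graded (with the summands indexed by $m\notin\sigma^\vee$ equal to zero). The Hochschild differential $b$ is the alternating sum of face maps that either multiply adjacent tensor factors or cycle them; each of these operations preserves the total weight, so $b$ is $M$-graded and $HH_*(U_\sigma)$ inherits an $M$-grading.

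Next I would check compatibility under face inclusions. If $\tau\prec\sigma$, then $U_\tau\subset U_\sigma$ is obtained by inverting finitely many $\chi^{m_i}$, each homogeneous in the $M$-grading; thus $k[\sigma^\vee\cap M]\to k[\tau^\vee\cap M]$ is an $M$-graded ring homomorphism, inducing an $M$-graded map of Hochschild complexes. More generally, for any $i_0\le\cdots\le i_p$ the intersection $U_{\sigma_{i_0}}\cap\cdots\cap U_{\sigma_{i_p}}$ is the affine toric variety $U_{\sigma_{i_0}\cap\cdots\cap\sigma_{i_p}}$, and all the \v{C}ech face and degeneracy maps are induced by face inclusions of cones, hence are $M$-graded. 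Assembling, the \v{C}ech bicomplex used to compute $HH_*(X)$ is $M$-graded, the total complex is $M$-graded, and this grading descends to $HH_*(X)$, giving the first assertion.

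For the behaviour of $\theta_c$, observe that on each affine piece $\theta_c$ is the ring endomorphism $\chi^m\mapsto\chi^{cm}$, so it acts on the Hochschild complex by
\[
\chi^{m_0}\otimes\cdots\otimes\chi^{m_p}\ \longmapsto\ \chi^{cm_0}\otimes\cdots\otimes\chi^{cm_p},
\]
which sends the weight $m$ summand into the weight $cm$ summand. Because $\theta_c$ is induced by the lattice endomorphism $n\mapsto cn$, it commutes with every face inclusion $U_\tau\subset U_\sigma$, hence it extends to an endomorphism of the entire \v{C}ech bicomplex that takes the weight $m$ part into the weight $cm$ part at every bidegree. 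Passing to homology gives the second assertion. The only mildly delicate point is bookkeeping the \v{C}ech assembly, but since every ingredient is visibly $M$-graded there is no real obstacle.
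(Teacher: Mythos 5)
Your proposal is correct and follows essentially the same route as the paper's proof: both establish the $M$-grading on the affine Hochschild complexes, note that the \v{C}ech differentials are induced by $M$-graded ring maps so the whole bicomplex is $M$-graded, and observe that $\theta_c$ acts on each weight-$m$ summand by sending it to weight $cm$. Your write-up is somewhat more explicit about the face-map bookkeeping, but there is no substantive difference.
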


\begin{proof}
We have seen that the Hochschild complexes forming the columns of the
bicomplex are $M$-graded.
Since the ring maps are all $M$-graded, the {\v C}ech differentials
are also $M$-graded. Since $HH_*(X)$ is the homology of an $M$-graded
bicomplex, it is $M$-graded. Since the map $\theta_c$ sends the
weight $m$ subcomplex to the weight $cm$ subcomplex,
it has the same effect on homology.
\end{proof}

\begin{subremark}\label{CechSS}
This construction implies that the {\v C}ech spectral spectral sequence
is $M$-graded:
$$
E^1_{pq} = \bigoplus_{i_0< \dots< i_p}
HH_q(U_{\sigma_{i_0} \cap \cdots \cap \sigma_{i_p}})
\Rightarrow HH_{q-p}(X).
$$
\end{subremark}

\begin{lem}\label{invert-m}
Set $A=\sigma^\vee\cap M$.
If $m\in A$ lies on no proper face of $\sigma^\vee$, then
$A+\langle-m\rangle=M$, and $k[A][\chi^{-m}]=k[M]$.
\end{lem}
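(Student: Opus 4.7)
The plan is straightforward convex geometry. The hypothesis that $m$ lies on no proper face of $\sigma^\vee$ means $\sigma^\vee(n) \neq \sigma^\vee$ fails to contain $m$ for every nonzero $n \in \sigma$, i.e., $\langle m, n \rangle > 0$ for every nonzero $n \in \sigma$. (Indeed, since $\sigma$ is strongly convex, $\sigma^\vee(n) = \sigma^\vee$ forces $n \in \sigma \cap (-\sigma) = \{0\}$.) The content to be proven is that $A + \N(-m) = M$; given this, the second assertion is automatic, since $k[A][\chi^{-m}]$ is precisely the monoid algebra on the submonoid of $M$ generated by $A$ and $-m$.

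To establish $A + \N(-m) = M$, I would fix an arbitrary $m' \in M$ and produce an integer $k \geq 0$ with $m' + km \in A$, so that $m' = (m'+km) + k(-m)$ exhibits $m'$ as an element of $A + \N(-m)$. Write $\sigma = \R_{\geq 0} v_1 + \cdots + \R_{\geq 0} v_r$ with $v_i \in N$ (if $\sigma = \{0\}$ there is nothing to prove, since then $A = M$). Set $\mu = \min_i \langle m, v_i \rangle$; by the interpretation of the hypothesis above, $\mu > 0$. Set $C = \max_i |\langle m', v_i \rangle|$, and choose any integer $k$ with $k \geq C/\mu$. Then for an arbitrary $n = \sum \lambda_i v_i \in \sigma$ with $\lambda_i \geq 0$,
\[
\langle m' + km, n \rangle \;=\; \sum_i \lambda_i \bigl( \langle m', v_i\rangle + k\langle m, v_i\rangle \bigr) \;\geq\; \sum_i \lambda_i (k\mu - C) \;\geq\; 0,
\]
so $m' + km \in \sigma^\vee \cap M = A$, as desired.

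There is no real obstacle here; the only point requiring any care is unpacking the phrase ``lies on no proper face of $\sigma^\vee$'' into the usable inequality $\langle m, n\rangle > 0$ for all $0 \neq n \in \sigma$, which is where the strong convexity of $\sigma$ enters.
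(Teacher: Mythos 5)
Your proof is correct and follows essentially the same route as the paper's: translate "$m$ lies on no proper face of $\sigma^\vee$" into the strict positivity $\langle m, n\rangle > 0$ for all nonzero $n \in \sigma$, then show $m' + km \in \sigma^\vee$ for $k \gg 0$ by pairing against a finite generating set of $\sigma$. The paper compresses the final step to "for $i \gg 0$" invoking finite generation of $\sigma \cap N$, while you give an explicit bound $k \ge C/\mu$; these are the same argument, with yours slightly more carefully spelled out.
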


\begin{proof}
Since $k[A][\chi^{-m}]=k[A+\langle-m\rangle]$, it suffices to prove
the first assertion, \ie that every $t\in M$ is of the form $a-im$ for
some positive integer $i$. Fix a nonzero $n\in N$. The assumption that
$m$ lies on no proper face of $\sigma^\vee$ implies that
$\langle m,n\rangle>0$.  Hence $\langle t+im,n\rangle>0$ for $i\gg0$.
Since $\sigma\cap N$ is finitely generated,
it follows that $t+im\in A$ for $i\gg0$, as claimed.
\end{proof}

\begin{lem}\label{lem:scale}
The map $\theta_c:\Omega^q(U_\sigma)_m \to\Omega^q(U_\sigma)_{cm}$
is multiplication by $c^q\chi^{(c-1)m}$.
\end{lem}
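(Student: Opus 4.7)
The plan is to reduce the statement to a direct computation on a spanning set for the weight $m$ summand of $\Omega^q(U_\sigma)$, using the explicit description of Kähler differentials on the affine monoid scheme and the derivation property of $d$. Since $k[\sigma^\vee\cap M]$ is spanned over $k$ by the monomials $\chi^a$ for $a\in\sigma^\vee\cap M$, the module $\Omega^q(U_\sigma)$ is spanned as a $k$-vector space by elements of the form $\omega=\chi^{m_0}\,d\chi^{m_1}\wedge\cdots\wedge d\chi^{m_q}$ with $m_i\in\sigma^\vee\cap M$, and such an $\omega$ has weight $m_0+m_1+\cdots+m_q$. Thus it suffices to check the asserted formula on every such $\omega$ with $\sum_{i} m_i = m$.

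The key step is then the chain-rule computation. Applying $\theta_c$ term by term and using that $\chi^{cm_i}=(\chi^{m_i})^c$, one has $d\chi^{cm_i}=c\,\chi^{(c-1)m_i}\,d\chi^{m_i}$ by the derivation property. Collecting the $q$ factors of $c$ and the accumulated monomial prefactors yields
\[
\theta_c(\omega)=\chi^{cm_0}\prod_{i=1}^{q}\bigl(c\,\chi^{(c-1)m_i}\bigr)\,d\chi^{m_1}\wedge\cdots\wedge d\chi^{m_q}
= c^{q}\,\chi^{cm_0+(c-1)(m_1+\cdots+m_q)}\,d\chi^{m_1}\wedge\cdots\wedge d\chi^{m_q}.
\]
Since $cm_0+(c-1)(m_1+\cdots+m_q)=(c-1)m+m_0$, the right-hand side equals $c^{q}\,\chi^{(c-1)m}\cdot\omega$, which is exactly what we want. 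Note that the image lands in weight $cm$, as it must, confirming the indexing. The only thing to verify is that this computation is compatible with the relations defining $\Omega^q$, but these relations are homogeneous with respect to both the $M$-grading and multiplication by the central scalar $\chi^{(c-1)m}$, so the formula descends to $\Omega^q(U_\sigma)_m$ without ambiguity.

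I do not anticipate any real obstacle here: the whole content is the observation that the $M$-grading on $\Omega^q(U_\sigma)$ comes from a spanning set of wedges of $d\chi^{m_i}$'s, and that $\theta_c$ acts on each such wedge by raising monomials to the $c$-th power, producing one factor of $c$ per differential and one factor of $\chi^{(c-1)m_i}$ per differential, which combine with $\chi^{cm_0}$ to give $c^{q}\chi^{(c-1)m}$ times the original form.
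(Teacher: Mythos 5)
Your proof is correct and takes essentially the same approach as the paper: the paper's one-line argument simply states that $\theta_c$ sends $\omega=\chi^{u_0}\,d\chi^{u_1}\wedge\cdots\wedge d\chi^{u_q}$ (with $\sum u_i=m$) to $c^q\chi^{(c-1)m}\omega$, which is exactly the chain-rule computation you carry out in detail. Your closing paragraph about compatibility with relations is unnecessary but harmless: $\theta_c$ and multiplication by $c^q\chi^{(c-1)m}$ are both $k$-linear maps on the weight-$m$ summand, so agreeing on a spanning set already forces them to agree.
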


\begin{proof} When $\sum u_i=m$, $\theta_c$ takes
$\omega=\chi^{u_0}d\chi^{u_1}\wedge\cdots d\chi^{u_q}$
to $c^q\chi^m\omega$.
\end{proof}

\begin{subremark}\label{t-scale}
The same proof shows that the map
$\theta_c:\tOmega^q(U_\sigma)_m\to\tOmega^q(U_\sigma)_{cm}$
is multiplication by $c^q\chi^{(c-1)m}$. By \eqref{E1},
this is an isomorphism for all $c\ne0$.
\end{subremark}

\begin{prop}\label{cOmega}
For any toric $k$-variety $X$, the natural maps \eqref{E3} induce
isomorphisms, for all $q$:
$$
\Omega^q(X)^\fc  \to \tOmega^q(X)^\fc
$$
\end{prop}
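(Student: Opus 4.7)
The plan is to exploit the $M$-grading together with Lemma~\ref{lem:scale} and Remark~\ref{t-scale}, which identify $\theta_c$ on each weight-$m$ summand with multiplication by $c^q\chi^{(c-1)m}$ and show it is an isomorphism on $\tOmega^q$. First I would reduce to the affine case: both $\Omega^q$ and $\tOmega^q$ are Zariski sheaves, so since $X=X(\Delta)$ has a finite affine cover by the $U_\sigma$ with $\sigma$ ranging over maximal cones, $\Omega^q(X)$ and $\tOmega^q(X)$ appear as kernels of finite \v Cech-type maps on this cover (and intersections $U_{\sigma\cap\sigma'}$ are again affine toric). Since filtered colimits commute with finite limits, it suffices to show $\Omega^q(U_\sigma)^\fc\to\tOmega^q(U_\sigma)^\fc$ is an isomorphism for each cone $\sigma$.

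On a fixed $U_\sigma$, the $M$-grading reduces the question to each weight. Injectivity of the colimit map is immediate: $\Omega^q(U_\sigma)\hookrightarrow\tOmega^q(U_\sigma)$ (via \cite{Vasc}, as cited in Example~\ref{hugeK1}), and since $\theta_c$ is invertible on $\tOmega^q(U_\sigma)_m$ by Remark~\ref{t-scale}, any class in $\Omega^q(U_\sigma)^\fc$ that maps to zero can be represented by a homogeneous $\omega$ whose image in $\tOmega^q(U_\sigma)$ is already zero, whence $\omega=0$. For surjectivity it suffices to show: for each homogeneous $\omega\in\tOmega^q(U_\sigma)_m$ there is an integer $c$, which can be taken as a product of the $c_i$'s, with $\chi^{(c-1)m}\omega$ in the image of $\Omega^q(U_\sigma)_{cm}$. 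By Lemma~\ref{lem:scale} this is equivalent to $\theta_c(\bar\omega)=0$ in $\tOmega^q(U_\sigma)/\Omega^q(U_\sigma)$, killing the class in the colimit.

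The main step is this surjectivity. By \eqref{E1} write $\omega=\chi^m\otimes(e_1\wedge\cdots\wedge e_q)$ with $e_i\in M\cap\sigma(m)^\perp$. The key observation is that $\sigma^\vee\cap\sigma(m)^\perp$ is a face of $\sigma^\vee$ whose $\R$-span is $\sigma(m)^\perp$, so each $e_i$ splits as $a_i-b_i$ with $a_i,b_i\in\sigma^\vee\cap M\cap\sigma(m)^\perp$; by multilinearity of $\wedge$, I may assume $\omega=\chi^m\otimes(a_1\wedge\cdots\wedge a_q)$ with all $a_i\in\sigma^\vee\cap M\cap\sigma(m)^\perp$. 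Consider then the K\"ahler form
\[
\chi^{u_0}\,d\chi^{a_1}\wedge\cdots\wedge d\chi^{a_q}\in\Omega^q(U_\sigma)_{cm},\qquad u_0:=cm-a_1-\cdots-a_q.
\]
Since $\langle a_i,n\rangle=0$ for $n\in\sigma(m)$ while $\langle m,n\rangle>0$ for $n\in\sigma\setminus\sigma(m)$, checking on the finitely many rays of $\sigma$ shows $u_0\in\sigma^\vee$ for all sufficiently large $c$; under \eqref{E3} this form maps to $(1/q!)\chi^{(c-1)m}\omega$, and inverting $q!$ (char.\ $0$) gives $\chi^{(c-1)m}\omega\in\Omega^q(U_\sigma)$ as required. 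Since every $c_i\ge 2$, the partial products $c_1\cdots c_N$ grow without bound, so any such $c$ is realized in the directed system. The main obstacle is identifying the face $\sigma^\vee\cap\sigma(m)^\perp$ that spans $\sigma(m)^\perp$; without stripping each $e_i$ into elements of the dual cone, naive preimage constructions introduce spurious $m\wedge\cdots$ contributions one would have to cancel separately.
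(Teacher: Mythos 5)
Your surjectivity argument is correct and is a nice direct construction that differs from the paper's: by splitting each $e_i\in M\cap\sigma(m)^\perp$ as a difference of elements of $\sigma^\vee\cap M\cap\sigma(m)^\perp$ and choosing $c$ large enough so that $u_0=cm-\sum a_i$ lands in $\sigma^\vee$, you produce an explicit K\"ahler form hitting $(1/q!)\chi^{(c-1)m}\omega$. The reduction to the affine case and to a single weight $m$ is also fine.

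The injectivity step, however, has a genuine gap. You assert $\Omega^q(U_\sigma)\hookrightarrow\tOmega^q(U_\sigma)$ ``via \cite{Vasc},'' but Vasconcelos' result, as used in Example~\ref{hugeK1}, is about $\Omega^1$ only; it says nothing about $\wedge^q$ of it. In fact the claim is false for $q>1$ in general. The kernel of $\Omega^q_X\to\tOmega^q_X=j_*\Omega^q_U$ is exactly the torsion submodule (sections supported on the singular locus), and singular affine toric varieties typically do have torsion in $\Omega^q$ for $q\ge2$. The cleanest counterexample: for $q>\dim X$ one has $\tOmega^q(U_\sigma)=0$ while $\Omega^q(U_\sigma)$ is a nonzero torsion module (e.g.\ $\Omega^3$ of a $2$-dimensional quadric cone singularity is nonzero). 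So ``$\omega$ maps to $0$ in $\tOmega^q$, hence $\omega=0$'' does not follow, and your argument does not show that this torsion dies in the colimit $(-)^\fc$.

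The paper avoids this by proving more: it identifies $\Omega^q(U_\sigma)^\fc_m$ outright. Lemma~\ref{HH-m} gives $\Omega^q(k[A])_m\cong\Omega^q(k[B])_m$ with $B=A\cap\sigma(m)^\perp$; then by Lemma~\ref{lem:scale} the colimit in weight $m$ is the weight-$m$ piece of $\Omega^q(k[B][\chi^{-m}])$, and by Lemma~\ref{invert-m} this is $\Omega^q(k[T])_m\cong\wedge^q(T)\otimes k$ with $T=M\cap\sigma(m)^\perp$ a free abelian group. Comparing with \eqref{E1} shows the map \eqref{E3} is an isomorphism in each weight, giving injectivity and surjectivity simultaneously. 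Conceptually, the torsion in $\Omega^q(U_\sigma)_m$ corresponds under Lemma~\ref{HH-m} to torsion of $\Omega^q$ on the orbit closure $V(\sigma(m))$, and the colimit localizes at $\chi^m$, i.e.\ restricts to the dense smooth torus in $V(\sigma(m))$, where no torsion survives. To repair your proof you would need to prove this vanishing of torsion in the colimit, at which point you have essentially reproduced the paper's computation; the direct construction you give is then only needed for one inclusion, which is pleasant but does not save work.
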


\begin{proof}
We may assume $X=U_\sigma$, so that $\Omega^q(X)=\Omega^q_{k[A]}$ for
$A=\sigma^\vee\cap M$.
It suffices to check that the map is an isomorphism in each
weight $m \in M_\fc$; without loss of generality, one may assume
$m \in M$. By Lemma \ref{HH-m},
$(\Omega^q_{k[A]})_m\cong (\Omega^q_{k[B]})_m$, where $B=A\cap\sigma(m)^\perp$.
By Lemma \ref{lem:scale}, $\theta_c$ coincides with multiplication by
$c^q \chi^{(c-1)m}$ both as a map
$(\Omega^q_{k[A]})_m \to (\Omega^q_{k[A]})_{cm}$ and as a map
$(\Omega^q_{k[B]})_m \to (\Omega^q_{k[B]})_{cm}$. Hence the group
$$
\Omega^q(X)^\fc_m = \varinjlim \left(
(\Omega^q_{k[A]})_m ~\map{\theta_{c_1}}~
(\Omega^q_{k[A]})_{c_1m} ~\map{\theta_{c_2}}~  \cdots \right)
$$
is the weight $m$ part of the localization of $\Omega^q_{k[B]}$ at
$\chi^m$, \ie of $\Omega^q(k[B][\chi^{-m}])$.
By construction, $m$ is not on any proper face of
$\sigma(m)^\vee\cap\sigma(m)^\perp$.
By Lemma \ref{invert-m},
$$
\Omega^q(k[B][\chi^{-m}])_m \cong
\Omega^q(k[B+\langle-m\rangle])_m = \Omega^q(k[T])_m,
\qquad T = M \cap \sigma(m)^\perp.
$$
Since $T$ is a free abelian group, $(\Omega^q_{k[T]})_m \cong \wedge^q(T)\otimes k$.
Now recall that by Remark \ref{t-scale} and (\ref{E1}) we also have
$$
(\tOmega^q_{k[T]})^{\fc}_m \cong
\tOmega^q(U_\sigma)^{\fc}_m = \tOmega^q(U_\sigma)_m \cong
k\cdot\chi^m\otimes \wedge^q(T),
$$
The map $(\Omega^q_{k[T)})_m\to (\tOmega^q_{k[T)})_m$ is given
by \eqref{E3}, and it is an isomorphism by inspection.
\end{proof}

In order to prove an analogous result for Hochschild homology, we
need to briefly review the decomposition of Hochschild homology into
summands given by the (higher) Andr\'e-Quillen homology groups.
For more details, we refer the reader to \cite[3.5]{Lo} or \cite[8.8]{WHomo}.

For a commutative $k$-algebra $R$, one forms a simplicial polynomial
$k$-algebra $R_\mathdot$ and a simplicial ring map $R_\mathdot \to R$
which is a homotopy equivalence on underlying simplicial sets.
The (higher) {\it cotangent complex} $\bL^{(q)}_{X/k}$ is defined to
be the simplicial
$R$-module $R\otimes_{R_\mathdot}\Omega^q_{R_\mathdot}$, and the
Andr\'e-Quillen homology groups of $R$ are defined to be
$D_p^{(q)}(R) = H_p(\bL^{(q)}_{X/k})$.
The $R$-modules $D_p^{(q)}(R)$ are independent up to isomorphism of
the choices made.

In general, there is a natural spectral sequence of $R$-modules
$$
D_p^{(q)}(R) \Longrightarrow HH_{p+q}(R)
$$
and a natural $R$-module isomorphism $D_0^{(q)}(R) \cong \Omega^q_{R/k}.$
Since we are assuming $\chr(k) = 0$, this spectral sequence
degenerates to give a natural decomposition of $R$-modules
$$
HH_n(R) \cong \bigoplus_{p+q = n} D_p^{(q)}(R) = \Omega^q_{R/k} \oplus
\bigoplus_{p+q = n, p > 0} D_p^{(q)}(R).
$$

Since the Andr\'e-Quillen homology groups are functorial for ring
maps, the endomorphisms $\theta_{c_i}$ preserve this decomposition.

\begin{lem}\label{AQscale}
Let $U_\sigma$ be an affine toric variety. Then the
$D_p^{(q)}(U_\sigma)$ are $M$-graded modules and,
for every $m\in\sigma^\vee\cap M$,
the map $\theta_c:D_p^{(q)}(U_\sigma)_m \to D_p^{(q)}(U_\sigma)_{cm}$
is multiplication by $c^q\chi^{(c-1)m}$.

\end{lem}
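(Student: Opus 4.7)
My plan is to lift the scaling formula of Lemma \ref{lem:scale} through a carefully chosen simplicial polynomial resolution of $R := k[\sigma^\vee\cap M]$. For the $M$-grading of $D_p^{(q)}(R)$, I would invoke the Hochschild--Kostant--Rosenberg decomposition $HH_n(R) \cong \bigoplus_{p+q=n} D_p^{(q)}(R)$ recorded just before the lemma: it is natural in ring maps, so combined with Lemma \ref{Mgraded} (which gives the $M$-grading on $HH_n(R)$ and the weight shift $m\mapsto cm$ under $\theta_c$), each summand $D_p^{(q)}(R)$ inherits an $M$-grading on which $\theta_c$ sends the weight $m$ piece into the weight $cm$ piece.

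For the scaling constant, the idea is to build a simplicial polynomial resolution $P_\mathdot \to R$ in which each $P_n$ is a monoid algebra $k[\tilde A_n]$ on a free commutative monoid $\tilde A_n$, every face and degeneracy map is induced by a monoid homomorphism between the $\tilde A_n$'s, and the $M$-grading arises from compatible monoid maps $\tilde A_n \to M$. The standard simplicial bar construction in the category of commutative monoids, applied to the canonical surjection from the free commutative monoid on $A := \sigma^\vee\cap M$ onto $A$, produces such a resolution; each $P_n$ is then a genuine polynomial $k$-algebra, $M$-graded through the given monoid map $\tilde A_n \to M$.

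Now lift $\theta_c$ to a simplicial endomorphism of $P_\mathdot$ by taking ``multiplication by $c$'' on each $\tilde A_n$; this induces the ring map $X\mapsto X^c$ on every monoid generator of $k[\tilde A_n]$. Because multiplication by $c$ is a natural self-map of the identity functor on commutative monoids, it commutes with every monoid homomorphism and thus assembles into a genuine simplicial endomorphism of $P_\mathdot$ covering $\theta_c\colon R\to R$. This is exactly the type of endomorphism to which the proof of Lemma \ref{lem:scale} applies verbatim at each simplicial level: on the weight $m$ part of $\Omega^q(P_n)$ it acts as multiplication by $c^q\chi^{(c-1)m}$. Passing to the homology of $R\otimes_{P_\mathdot}\Omega^q_{P_\mathdot}$ then identifies $\theta_c$ on $D_p^{(q)}(R)_m$ with multiplication by $c^q\chi^{(c-1)m}$.

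The main technical point is constructing the simplicial monoid resolution with all the required compatibilities --- freeness of each $\tilde A_n$ as a commutative monoid so that $k[\tilde A_n]$ is polynomial, acyclicity of $P_\mathdot\to R$ at the level of underlying $k$-modules, and compatibility with the $M$-grading. Once that is in place, the functoriality of multiplication by $c$ automatically supplies the simplicial lift of $\theta_c$, and Lemma \ref{lem:scale} does the rest of the work.
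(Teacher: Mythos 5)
Your proposal is correct and follows essentially the same route as the paper: resolve $A=\sigma^\vee\cap M$ by free commutative monoids via the cotriple (bar) construction, observe that $k[\tilde A_\mathdot]\to k[A]$ is then a free simplicial resolution carrying a compatible $M$-grading, lift $\theta_c$ to the level-wise ``multiply by $c$'' endomorphism (which is automatically simplicial since multiplication by $c$ commutes with all monoid homomorphisms), and apply the computation of Lemma \ref{lem:scale} degree by degree. The only minor difference is that you obtain the $M$-grading on $D_p^{(q)}$ by invoking the $\lambda$-decomposition of $HH_n$ together with Lemma \ref{Mgraded}, whereas the paper reads the grading directly off the $M$-graded resolution; both work, and since you construct the graded resolution anyway, the HKR detour is a slight redundancy rather than a gap.
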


\begin{proof}
Let $A = \sigma^\vee \cap M$ and form a simplicial
resolution of $A$ by free abelian monoids $A_\mathdot \to A$. That is,
$A_\mathdot$ is a simplicial abelian monoid which in each degree is
free abelian and the map of simplicial abelian monoids $A_\mathdot \to A$
is a homotopy equivalence.
This is
possible by the same basic cotriple resolution used to form simplicial free
resolutions of $k$-algebras (see \cite[8.6]{WHomo}).
For functorial reasons, $k[A_\mathdot] \to k[A]$ is a free
simplicial resolution of $k[A]$.
We therefore have
$$
D_p^{(q)}(k[A]) = H_p(k[A]\otimes_{k[A_\mathdot]}\Omega^q_{k[A_\mathdot]}).
$$
For each $n$, the ring $k[A_n]$ is $M$-graded by the
maps $\delta_n: A_n \to A \subset M$. Thus the simplicial ring
$k[A_\mathdot]$ is also $M$-graded and the map $k[A_\mathdot] \to k[A]$ of
simplicial rings preserves this grading. It follows that
$k[A]\otimes_{k[A_\mathdot]}\Omega^q_{k[A_\mathdot]}$
is naturally $M$-graded, where the weight of
$\chi^{u_0}\otimes d(\chi^{u_1}) \wedge\cdots\wedge d(\chi^{u_q})$
is $u_0+\delta_n(u_1) + \cdots + \delta_n(u_q)$, for
any $u_0 \in A$ and $u_1, \dots, u_q \in A_n$.  Hence $D_p^{(q)}(k[A])$
is an $M$-graded $k[A]$-module, and it is clear that, for any positive
integer $c$, the endomorphism $\theta_c$ of $D_p^{(q)}(k[A])$ maps the
weight $m$ summand to the weight $cm$ summand.
To prove that the map
$$
\theta_c: D_p^{(q)}(k[A])_m \to D_p^{(q)}(k[A])_{cm}
$$
coincides with multiplication by $c^q \chi^{(c-1)m}$, it
suffices to prove the analogous assertion for the
$M$-graded $k[A]$-modules
$
k[A]\otimes_{k[A_n]}\Omega^q_{k[A_n]}.
$
The proof of this is exactly like the proof of Lemma \ref{lem:scale},
using $\omega=\chi^{u_0}\otimes d\chi^{u_1} \wedge\cdots\wedge d\chi^{u_q}$.
\end{proof}

\begin{thm} \label{Thm2}
For any toric $k$-variety $X$, the natural maps
$$
\Omega^q(X)^\fc \to HH_q(X) ^\fc
$$
are isomorphisms, for all $q$.
\end{thm}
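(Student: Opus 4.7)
The plan is to invoke the Hochschild--Kostant--Rosenberg decomposition of $HH_q$ into Andr\'e--Quillen summands and show that, after applying $(-)^\fc$, only the $\Omega^q$ summand survives. Since $(-)^\fc$ is an exact filtered colimit and commutes termwise with the \v Cech bicomplex of Remark \ref{CechSS}, it suffices to prove the statement when $X = U_\sigma$ is affine with $A = \sigma^\vee \cap M$.

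In the affine case, HKR gives $HH_q(k[A]) \cong \bigoplus_{p+r=q} D_p^{(r)}(k[A])$, and the natural map $\Omega^q(U_\sigma) \to HH_q(U_\sigma)$ is the inclusion of the leading summand $D_0^{(q)} = \Omega^q$. By naturality of the Andr\'e--Quillen construction this decomposition is $M$-graded and preserved by the $\theta_c$'s, so it is enough to show $D_p^{(r)}(U_\sigma)^\fc_m = 0$ for every weight $m$ and every $p > 0$. When $m = 0$, strong convexity of $\sigma$ forces any expression $0 = u_0 + \cdots + u_p$ with $u_i \in \sigma^\vee \cap M$ to be trivial, so the weight-$0$ Hochschild subcomplex is $k$ concentrated in degree zero and $D_p^{(r)}(U_\sigma)_0$ already vanishes for $p + r > 0$.

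For a nonzero weight $m$ I would follow the template of Proposition \ref{cOmega}. An Andr\'e--Quillen analogue of Lemma \ref{HH-m}, obtained from the ring-split inclusion $k[B] \hookrightarrow k[A]$ with $B = A \cap \sigma(m)^\perp$ together with naturality of HKR, identifies $D_p^{(r)}(k[A])_m$ with $D_p^{(r)}(k[B])_m$. The dilation action on $k[B]$ is again given by the scaling formula of Lemma \ref{AQscale}, so the colimit under the $\theta_{c_i}$'s realises the weight-$m$ part of the localisation $D_p^{(r)}(k[B][\chi^{-m}])$; by Lemma \ref{invert-m} this localisation equals $D_p^{(r)}(k[T])$ for $T = M \cap \sigma(m)^\perp$, a free abelian group. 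Since $k[T]$ is a Laurent polynomial ring and hence smooth over $k$, its higher Andr\'e--Quillen groups vanish by HKR, yielding the desired $D_p^{(r)}(U_\sigma)^\fc_m = 0$.

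The main obstacle will be making the localisation step precise: verifying that the dilation colimit of $D_p^{(r)}(k[B])$ really is the weight-$m$ part of $D_p^{(r)}(k[B][\chi^{-m}])$. This uses that higher Andr\'e--Quillen homology commutes with filtered colimits of rings (a standard consequence of the simplicial definition) and that the explicit scaling factors of Lemma \ref{AQscale} assemble, weight by weight, into multiplication by units in the localisation at $\chi^m$.
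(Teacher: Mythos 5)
Your proposal follows the paper's proof essentially step by step: reduce to the affine case via the {\v C}ech spectral sequence of Remark \ref{CechSS}, use the HKR/Andr\'e--Quillen decomposition to reduce to showing $D_p^{(r)}(k[A])^\fc = 0$ for $p > 0$, then in each weight $m$ invoke Lemma \ref{HH-m} to pass to $k[B]$, Lemma \ref{AQscale} to identify the dilation colimit with localization at $\chi^m$, and Lemma \ref{invert-m} to land in the Laurent ring $k[T]$ where the higher Andr\'e--Quillen groups vanish. The separate treatment of weight $0$ is harmless but unnecessary (it is subsumed by the general case, since for $m=0$ one has $B=T=M\cap\sigma^\perp$), and the ``main obstacle'' you flag is handled in the paper precisely by the explicit scaling factor $c^q\chi^{(c-1)m}$ of Lemma \ref{AQscale}.
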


\begin{proof}
By the spectral sequence in \ref{CechSS},
we may assume that $X$ is affine, say of the
form $X = U_\sigma$ for some cone $\sigma$. Setting $A=\sigma^\vee \cap M$,
the coordinate ring of $X$ is $k[A]$.
To establish the isomorphism $\Omega^p(U_\sigma)^\fc \cong
HH_p(U_\sigma)^\fc$ it suffices to prove that
$$
D_p^{(q)}(k[\sigma^\vee \cap M])^\fc = 0
$$
for all $p > 0$. As in the proof of Proposition \ref{cOmega},
it suffices to fix an arbitrary $m\in M$ and show that the
weight $m$ part vanishes. By Lemma \ref{HH-m},
$D_p^{(q)}(k[A])_m \cong D_p^{(q)}(k[B])_m$, where
$B=A\cap\sigma(m)^\perp$. By Lemma \ref{AQscale},
$\theta_c$ coincides with multiplication by $c^q \chi^{(c-1)m}$ both
as a map $D_p^{(q)}(k[A])_m\to D_p^{(q)}(k[A])_{cm}$ and as a map
$D_p^{(q)}(k[B])_m\to D_p^{(q)}(k[B])_{cm}$. Hence the weight $m$ summand
$$
D_p^{(q)}(X)^\fc_m = \varinjlim \left(
D_p^{(q)}(k[A])_m ~\map{\theta_{c_1}}~
D_p^{(q)}(k[A])_{c_1m} ~\map{\theta_{c_2}}~ \cdots \right)
$$
is the weight $m$ part of the localization of $D_p^{(q)}(k[B])$ at
$\chi^m$, \ie of $D_p^{(q)}(k[B][\chi^{-m}])$.

Recall that $\sigma(m) \subset \sigma$ denotes the face of $\sigma$
(possibly just the origin) on which $m = 0$. By Lemma \ref{invert-m},
$$
D_p^{(q)}(k[B][\chi^{-m}])_m \cong
D_p^{(q)}(k[B+\langle-m\rangle])_m = D_p^{(q)}(k[T])_m,
\quad T = M \cap \sigma(m)^\perp.
$$
Since $T=M\cap\sigma(m)^\perp$ is a free abelian group, we have
$$
D_p^{(q)}(k[B][\frac{1}{\chi^m}]) = D_p^{(q)}(k[T])=0
$$
for all $p>0$. This proves that $D_p^{(q)}(k[A])^\fc = 0$
for all $p>0$, proving the theorem.
\end{proof}

\begin{cor}\label{cor2}
For any field $k$ of characteristic $0$ and any toric
$k$-variety $X$, we have a natural isomorphism for all $n$:
$$
HH_n(X/\Q)^\fc \map{\simeq} \bH_\cdh^{-n}(X,HH(-/\Q))^{\fc}.
$$
\end{cor}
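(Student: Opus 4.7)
The plan is to reduce to the $\Q$-analogues of Theorem~\ref{Thm2} and Proposition~\ref{cOmega} and then exploit the Hodge decomposition of Hochschild homology.

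First, I would observe that the proofs of Lemmas~\ref{HH-m}, \ref{lem:scale}, \ref{AQscale}, Proposition~\ref{cOmega} and Theorem~\ref{Thm2} are formal in the base field: they rely only on the $M$-grading of $k[\sigma^\vee\cap M]=k\otimes_\Q \Q[\sigma^\vee\cap M]$ and on the explicit action of $\theta_c$ on weight-$m$ pieces as multiplication by $c^q\chi^{(c-1)m}$. Hence they transpose verbatim to $\Q$-coefficients, giving for each affine toric $U_\sigma$ a chain of natural isomorphisms
$$HH_q(U_\sigma/\Q)^\fc \cong \Omega^q(U_\sigma/\Q)^\fc \cong \tOmega^q(U_\sigma/\Q)^\fc.$$
The same arguments applied to Andr\'e--Quillen homology show $D_p^{(q)}(U_\sigma/\Q)^\fc=0$ for $p>0$, so the Hodge weight-$t$ summand $HH_q^{(t)}(U_\sigma/\Q)^\fc$ vanishes unless $t=q$, in which case it equals $\tOmega^q(U_\sigma/\Q)^\fc$.

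Next, I would sheafify: for each $t$, write $\cH_q^{(t)}$ for the Zariski sheaf on $X$ associated to $U\mapsto HH_q^{(t)}(U/\Q)$. Since $\fc$-inverting is an exact filtered colimit that commutes with sheafification and, by Noetherianness of toric varieties, with Zariski cohomology, the affine computation globalises to quasi-isomorphisms of Zariski sheaves $(\cH_q^{(t)})^\fc\simeq (\tOmega^q_{X/\Q})^\fc$ when $t=q$ and $(\cH_q^{(t)})^\fc\simeq 0$ otherwise. Feeding these into the Zariski hypercohomology spectral sequence
$$E_2^{p,q,(t)}=H_\zar^p(X,\cH_q^{(t)})\Rightarrow HH_{q-p}^{(t)}(X/\Q)$$
and $\fc$-inverting, only the column $q=t$ survives, so the sequence degenerates and yields $HH_n^{(t)}(X/\Q)^\fc\cong H_\zar^{t-n}(X,\tOmega^t_{X/\Q})^\fc$. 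Summing over $t$ and comparing with the $\fc$-inverted form of Corollary~\ref{HH/Q} gives the claimed isomorphism; naturality of the map is automatic because every identification is induced by a natural transformation of complexes of sheaves.

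The main obstacle will be verifying that the proofs of Lemmas~\ref{HH-m}--\ref{AQscale} and of Theorem~\ref{Thm2} genuinely descend to $\Q$-coefficients. One must check that the $M$-graded pieces of $\Omega^q_{k[A]/\Q}$ and $\tOmega^q_{U_\sigma/\Q}$ remain well-behaved (in particular free of finite rank) after localisation at $\chi^m$, so that the inductive identification with $\Omega^q_{k[M\cap\sigma(m)^\perp]/\Q}$ and its Danilov analogue goes through, and that the resulting $k[T]$-modules are still acyclic for the higher Andr\'e--Quillen functors $D_p^{(q)}$ with $p>0$ after passage to the weight-$m$ summand.
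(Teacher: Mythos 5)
Your approach is correct, but it takes a genuinely different route from the paper's. The paper does \emph{not} re-derive the affine $\Q$-analogues of Lemmas~\ref{HH-m}, \ref{lem:scale}, \ref{AQscale}, Proposition~\ref{cOmega} and Theorem~\ref{Thm2}. Instead, it first establishes the $k$-coefficient statement $HH_n(X/k)^\fc\cong\bH^{-n}_\cdh(X,HH)^\fc$ directly from Theorems~\ref{Thm1} and~\ref{Thm2} together with Zariski descent (this is what you would get by taking $k=\Q$), and then bootstraps to $\Q$-coefficients: the K\"unneth formula $HH_*(X/\Q)^\fc\cong HH_*(X_\Q/\Q)^\fc\otimes_\Q\Omega^*_{k/\Q}$ transports the blow-up Mayer--Vietoris sequences from the $cdh$-fibrant side to $HH(-/\Q)^\fc$, the smooth case is supplied by \cite[2.4]{CHW}, and the general case follows by induction along toric resolutions and the five-lemma. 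Your version avoids that induction and the appeal to the smooth case, at the cost of re-running the affine weight computations over $\Q$ --- which does work, but not quite ``verbatim.'' On the K\"unneth summand $\Omega^i_{k[A]/k}\otimes_k\Omega^j_{k/\Q}$ of $\Omega^q_{k[A]/\Q}$ the map $\theta_c$ acts in weight $m$ by $c^i\chi^{(c-1)m}$, not $c^q\chi^{(c-1)m}$, so the scalar depends on the summand; this is harmless, since only the factor $\chi^{(c-1)m}$ together with nonvanishing of the scalar is needed to identify $(-)^\fc_m$ with the localisation at $\chi^m$. The Andr\'e--Quillen vanishing over $\Q$ likewise follows from the $k$-version together with the transitivity filtration on $\bL^{(q)}_{k[A]/\Q}$ with graded pieces $\bL^{(j)}_{k[A]/k}\otimes_k\Omega^i_{k/\Q}$ (using that $\bL_{k/\Q}$ is concentrated in degree zero because $k/\Q$ is ind-smooth), so the obstacle you flag at the end is genuine but surmountable. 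Finally, your globalisation step via the hypercohomology (or, more safely, the {\v C}ech spectral sequence of Remark~\ref{CechSS}, since the Zariski sheafification involves non-toric affine opens) is a valid alternative to the paper's blow-up induction.
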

\noindent
The right hand side in \ref{cor2} denotes Hochschild homology with $cdh$
descent imposed (and localized by $\fc$). (On both sides, we take
Hochschild homology over $\Q$.)

\begin{proof} Let us write $X_\Q$ for the model of $X$ defined over
the rationals and $X_k = X$ for the model over $k$. We have
$X_k = X_\Q \times_{\Spec \Q} \Spec k$.

The natural map
$$
HH_n({X_k}_{/k})^\fc \map{} \bH^{-n}_\cdh(X,HH)^\fc
$$
is an isomorphism. Since
both sides satisfy Zariski descent, this is an
  immediate consequence Theorem \ref{Thm1} and Theorem \ref{Thm2}.
The K\"unneth formula for Hochschild homology, described before
Corollary \ref{HH/Q}, gives
$$
HH_*({X/\Q})^\fc \cong HH_*({X_\Q}/\Q)^\fc \otimes_\Q \Omega^*_{k/\Q}.
$$
In particular, one gets long exact sequences for
$HH_*({-/\Q})^\fc$
associated to abstract blow-ups of toric $k$-varieties.
Since the map
$$
HH_n(X_k/\Q)^\fc \cong \bH_\cdh^{-n}(X,HH(-/\Q))^{\fc}
$$
is an isomorphism whenever $X$ is smooth by
\cite[2.4]{CHW}, the result holds by induction and the five-lemma.
\end{proof}

\begin{cor} For any field $k$ of characteristic $0$ and any toric
$k$-variety $X$, and all $n$, we have
$$
HC_n(X/\Q)^\fc \cong \bH_\cdh^{-n}(X,HC(-/\Q))^{\fc}.
$$
\end{cor}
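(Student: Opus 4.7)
The plan is to bootstrap from the Hochschild version, Corollary \ref{cor2}, to the cyclic statement via Connes' $SBI$ long exact sequence, inducting on Hodge (equivalently Adams) weight. In characteristic zero the $SBI$ triangle splits over Hodge weights: for each $t \ge 0$ there is an exact triangle of presheaves of complexes on $X_\zar$
\[
HH^{(t)}(-/\Q) \to HC^{(t)}(-/\Q) \to HC^{(t-1)}(-/\Q)[2] \to HH^{(t)}(-/\Q)[1],
\]
with the convention $HC^{(-1)}=0$. The operator $S$ lowers Hodge weight by $1$ and homological degree by $2$, and $B$ raises Hodge weight by $1$ and homological degree by $1$. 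Applying the triangulated functor $\R a_* a^*$ yields the parallel triangle of $cdh$-fibrant presheaves, compatibly with the natural localization map $E(-/\Q) \to \R a_* a^* E(-/\Q)$.

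By the octahedral axiom applied to this map of triangles, the mapping cones fit into an exact triangle
\[
\cF_{HH^{(t)}} \to \cF_{HC^{(t)}} \to \cF_{HC^{(t-1)}}[2] \to \cF_{HH^{(t)}}[1]
\]
in the derived category on $X_\zar$, where $\cF_{E^{(t)}}$ is the cone of $E^{(t)}(-/\Q) \to \R a_* a^* E^{(t)}(-/\Q)$. Setting $F^{(t)}_n(X) := \H^{-n}_\zar(X, \cF_{HC^{(t)}})$ and $G^{(t)}_n(X) := \H^{-n}_\zar(X, \cF_{HH^{(t)}})$ and applying the exact filtered colimit $(-)^\fc$ yields the long exact sequence
\[
G^{(t),\fc}_n \to F^{(t),\fc}_n \to F^{(t-1),\fc}_{n-2} \to G^{(t),\fc}_{n-1}.
\]

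Now Corollary \ref{cor2}, applied to each Hodge weight (using that the natural transformation $HH \to \R a_* a^* HH$ respects the Adams eigenspaces), gives $G^{(t),\fc}_n = 0$ for all $t, n$; hence $F^{(t),\fc}_n \cong F^{(t-1),\fc}_{n-2}$. For the base case $t=0$, both $HH^{(0)}$ and $HC^{(0)}$ agree with $\cO$ concentrated in degree $0$, so $\cF_{HC^{(0)}} = \cF_{HH^{(0)}}$ and therefore $F^{(0),\fc}_n = G^{(0),\fc}_n = 0$; alternatively one may invoke Example \ref{HC0-cdh} directly, which uses Danilov's rational singularities result (Remark \ref{ratsing}). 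Induction on $t$ then yields $F^{(t),\fc}_n = 0$ for all $t, n$, and summing across the Hodge decomposition gives $\H^{-n}_\zar(X, \cF_{HC})^\fc = 0$, which is equivalent to the asserted isomorphism.

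The main technical obstacle is the clean derivation of the mapping-cone triangle from the map of $SBI$ triangles via the octahedral axiom; once this is in place, the induction is forced by Corollary \ref{cor2} and the toric-specific base case. A secondary point is to verify that every triangle and natural transformation in sight respects the Hodge decomposition, but this is standard in characteristic zero through Adams operations.
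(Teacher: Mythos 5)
Your proof is correct. Both your argument and the paper's hinge on the $SBI$ exact sequence together with the Hochschild isomorphism of Corollary \ref{cor2}, but the inductions run along different axes. The paper works with the undecomposed $SBI$ sequence, maps it to its $cdh$-fibrant variant, applies $(-)^\fc$, and then runs a five-lemma-style induction on the homological degree $n$; the base case is supplied by the fact that the complexes are cohomologically bounded above, so $HC_n$ and $\bH_\cdh^{-n}$ both vanish for $n\ll0$. You instead split the $SBI$ triangle over Hodge weight, pass to the triangle of mapping cones $\cF_{HH^{(t)}}\to\cF_{HC^{(t)}}\to\cF_{HC^{(t-1)}}[2]$, and induct on the Hodge weight $t$, with a purely structural base case $\cF_{HC^{(0)}}=\cF_{HH^{(0)}}$ (equivalently, Example \ref{HC0-cdh} plus Remark \ref{ratsing}). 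What you gain is a base case that does not appeal to boundedness and an argument organized weight-by-weight; what you pay for is the extra bookkeeping of verifying that the localization map $HC\to\R a_*a^*HC$, the mapping cones, and Corollary \ref{cor2} are all compatible with the Hodge decomposition, plus the mild care required to get a distinguished triangle of cones out of a morphism of triangles (which is legitimate here since everything is realized at the chain level). Either route is a valid and essentially equally short path to the vanishing of $\bH^*_\zar(X,\cF_{HC})^\fc$.
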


\begin{proof}
There is a map from the SBI sequence for $HH$ and $HC$ to the SBI sequence
for its $cdh$-fibrant variant. Applying the exact functor $(-)^\fc$
yields a similar map of long exact sequences, every third term of which
is the isomorphism of Corollary \ref{cor2}. The result now follows by
induction on $n$, since all complexes are cohomologically bounded above.
\end{proof}

\goodbreak
\begin{thm} \label{Thm3}
For any field $k$ of characteristic $0$ and any toric
 $k$-variety $X$, we have
$$
K_*(X)^\fc \cong KH_*(X)^\fc.
$$
\end{thm}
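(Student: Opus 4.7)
The plan is to combine the long exact sequence of Theorem \ref{FKFHC} with the preceding corollary ($HC_n(X/\Q)^\fc \cong \bH_\cdh^{-n}(X,HC(-/\Q))^\fc$), using exactness of the filtered colimit functor $(-)^\fc$. Concretely, I would reduce the statement to proving that $H^{-n}_\zar(X,\cF_{HC}[1])^\fc = 0$ for all $n$, and then derive this vanishing from the defining triangle of $\cF_{HC}$.

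First I would apply the functor $(-)^\fc$ (which commutes with exact sequences since it is a filtered colimit) to the long exact sequence of Theorem \ref{FKFHC}:
\[
\cdots\to KH_{n+1}(X)^\fc \to H^{-n}_\zar(X,\cF_{HC}[1])^\fc \to K_n(X)^\fc\to KH_n(X)^\fc \to\cdots.
\]
Thus it suffices to show that the middle term vanishes for every $n$; equivalently, $H^{-n}_\zar(X,\cF_{HC})^\fc = 0$ for all $n$.

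Next, I would invoke the definition of $\cF_{HC}$ as the shifted mapping cone of the natural map $HC(-/\Q)\to \R a_* a^* HC(-/\Q)$, which gives a long exact sequence
\[
\cdots\to H^{-n}_\zar(X,\cF_{HC})\to HC_n(X/\Q)\to \bH^{-n}_\cdh(X,HC(-/\Q))\to H^{-n+1}_\zar(X,\cF_{HC})\to\cdots.
\]
Applying $(-)^\fc$ again, the middle map becomes an isomorphism by the corollary proved just before Theorem \ref{Thm3}. Exactness of the colimit then forces $H^{*}_\zar(X,\cF_{HC})^\fc = 0$, which is what we needed.

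The genuine work has already been done: the main obstacle is really the chain of identifications leading to $HC_n(X/\Q)^\fc \cong \bH_\cdh^{-n}(X,HC(-/\Q))^\fc$, which rests on Theorem \ref{Thm2} (the $\fc$-invariance comparison $\Omega^q(X)^\fc \cong HH_q(X)^\fc$) together with Theorem \ref{Thm1} to pass from Zariski to $cdh$ cohomology. Once those results are in hand, the deduction of Theorem \ref{Thm3} itself is a formal diagram chase using exactness of the dilation colimit and the Corti\~nas--Haesemeyer--Walker long exact sequence relating $K$ and $KH$ via $\cF_{HC}$.
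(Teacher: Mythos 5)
Your argument is precisely the paper's proof: apply exactness of the dilation colimit $(-)^\fc$ to the Theorem~\ref{FKFHC} sequence to reduce to vanishing of $H^*_\zar(X,\cF_{HC})^\fc$, then apply it again to the defining triangle of $\cF_{HC}$ and invoke the corollary $HC_n(X/\Q)^\fc\cong\bH_\cdh^{-n}(X,HC(-/\Q))^\fc$. You also correctly identify Theorems~\ref{Thm1} and \ref{Thm2} as where the real content lies.
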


\begin{proof}
Since $(-)^{\fc}$ is exact, it suffices by Theorem \ref{FKFHC} to
show that $H^*_\zar(X,\cF_{HC})^\fc$ vanishes. Again because
$(-)^{\fc}$ is exact, we have a long exact sequence
$$
\cdots\to H^n_\zar(X,\cF_{HC})^\fc \to HC_{-n}(X/\Q)^\fc \to
\bH_\cdh^n(X,HC(-/\Q))^{\fc}
$$
The desired vanishing follows from the previous corollary.
\end{proof}

\begin{cor}\label{cor:DT} (Gubeladze's Dilation Theorem)
Let $\Gamma$ be an arbitrary commutative, cancellative, torsionfree monoid
without nontrivial units. Then for every sequence $\fc$ and every $p$,
$(K_p(k[\Gamma])/K_p(k))^\fc=0$.
\end{cor}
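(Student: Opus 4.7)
The plan is to reduce to the affine toric case and combine Theorem \ref{Thm3} with Proposition \ref{Willies}. Since algebraic $K$-theory and the exact functor $(-)^\fc$ both commute with filtered colimits of rings, and $\Gamma$ is the filtered union of its finitely generated submonoids (each still cancellative, torsionfree, and without nontrivial units), I first reduce to the case that $\Gamma$ is finitely generated.

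For such $\Gamma$, let $M$ denote its group completion, which is a finitely generated torsionfree abelian group. The hypothesis that $\Gamma$ has no nontrivial units forces the rational polyhedral cone $\tau \subseteq M_\R$ generated by $\Gamma$ to be strongly convex, so $\sigma := \tau^\vee$ is a strongly convex, full-dimensional, rational cone in $N_\R$. The saturation of $\Gamma$ in $M$ is then $\bar\Gamma = \sigma^\vee \cap M$, so that $k[\bar\Gamma]$ is the coordinate ring of the affine toric variety $U_\sigma$. By Gordan's lemma $\bar\Gamma$ is a finitely generated monoid, and since each of its finitely many generators has a positive multiple in $\Gamma$, there exists $c_0 \geq 1$ with $c_0 \bar\Gamma \subseteq \Gamma$.

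The core step is to show the inclusion $\iota : k[\Gamma] \hookrightarrow k[\bar\Gamma]$ induces an isomorphism $K_p(k[\Gamma])^\fc \cong K_p(k[\bar\Gamma])^\fc$. For any $c$ divisible by $c_0$, the endomorphism $\theta_c$ of $k[\bar\Gamma]$ has image in $k[\Gamma]$ and so factors as $k[\bar\Gamma] \to k[\Gamma] \hookrightarrow k[\bar\Gamma]$; dually, $\theta_c$ on $k[\Gamma]$ factors as $k[\Gamma] \hookrightarrow k[\bar\Gamma] \to k[\Gamma]$. Since the partial products $c_1\cdots c_n$ of $\fc$ are eventually divisible by $c_0$, a standard telescope argument on filtered colimits shows these factorizations induce mutually inverse isomorphisms on $(-)^\fc$. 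This is the step I expect to be the main obstacle, as it requires careful cofinality bookkeeping with the sequence $\fc$ itself (individual $c_i$ need not be divisible by $c_0$).

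Finally, since $U_\sigma$ is a toric variety, Theorem \ref{Thm3} gives $K_p(k[\bar\Gamma])^\fc \cong KH_p(k[\bar\Gamma])^\fc$. The isomorphism \eqref{eq:KHT} in the proof of Proposition \ref{Willies} identifies $KH_p(k[\bar\Gamma])$ with $KH_p(k[M\cap\sigma^\perp])$, and the full-dimensionality of $\sigma$ forces $M\cap\sigma^\perp = 0$; this reduces the target to $K_p(k)$, on which each $\theta_c$ acts trivially, so $K_p(k)^\fc = K_p(k)$. The resulting composite isomorphism $K_p(k[\Gamma])^\fc \cong K_p(k)$ is compatible with the augmentation-induced splitting $K_p(k[\Gamma]) = K_p(k) \oplus K_p(k[\Gamma])/K_p(k)$, available because $\Gamma$ has no nontrivial units; hence $(K_p(k[\Gamma])/K_p(k))^\fc = 0$.
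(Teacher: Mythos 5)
The rest of your argument---the filtered-colimit reduction to finitely generated monoids, the identification of $KH_p(k[\bar\Gamma])^\fc$ with $K_p(k)$ via the $\N$-grading / equation \eqref{eq:KHT}, the use of the augmentation splitting, and the invocation of Theorem \ref{Thm3}---matches what the paper does. The gap is precisely where you suspected it: the reduction from $k[\Gamma]$ to its saturation $k[\bar\Gamma]$.

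The assertion that ``the partial products $c_1\cdots c_n$ of $\fc$ are eventually divisible by $c_0$'' is simply false: take $\fc=(2,2,2,\dots)$ and $c_0=3$. Without it, the telescope step collapses. What your factorizations actually give is a pair of maps $\iota_*:K_p(k[\Gamma])^\fc\to K_p(k[\bar\Gamma])^\fc$ and $r_*$ in the other direction with $r_*\iota_*=(\theta_{c_0})_*$ and $\iota_*r_*=(\theta_{c_0})_*$; but $(\theta_{c_0})_*$ is \emph{not} in general an automorphism of $(-)^\fc$ when $c_0$ fails to divide a cofinal family of partial products (for instance, on $F=\Z$ with $\theta_c$ multiplication by $c$ and $\fc=(2,2,\dots)$, one has $F^\fc=\Z[1/2]$ and $\theta_3$ is not surjective). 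So ``mutually inverse isomorphisms'' does not follow, and there is no evident cofinality bookkeeping that rescues it for an arbitrary $\fc$. This is a genuine missing idea, not a routine verification: the statement is required for \emph{every} sequence $\fc$ with $c_i\geq 2$, and the conductor $c_0$ is dictated by $\Gamma$ with no control over its prime factors. The paper sidesteps exactly this point by citing Gubeladze's reduction to affine positive normal monoids (\cite[3.4]{Gu95}, made explicit in \cite[Prop.\ 2.1]{Gu05}), which is a nontrivial result and not a formal consequence of the conductor inclusion $c_0\bar\Gamma\subseteq\Gamma$. If you want to fill the gap yourself you would need to supply an argument of that strength (e.g.\ via the Milnor conductor square for $k[\Gamma]\subseteq k[\bar\Gamma]$ together with control of the relative term), rather than the factorization-telescope you propose.
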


\begin{proof}
To prove the Dilation Theorem, it suffices to prove it
for all ``affine positive normal'' monoids, \ie for monoids of the form
$\Gamma=\sigma^\vee\cap M$ such that $\sigma^\perp=0$.
This is a reformulation of \cite[3.4]{Gu95},
and is stated explicitly in \cite[Proposition 2.1]{Gu05}
(up to the typo that $K_p(R[M])$ should be $K_p(R[M])/K_p(R)$).

For such $\Gamma$, $X=\Spec(k[\Gamma])$ is a toric variety, and
the proof of Proposition \ref{Willies} above shows that
$k[\Gamma]$ is $\N$-graded with $k$ in weight $0$. Hence
$KH(X) \simeq K(\Spec k)$. The result
now follows from Theorem \ref{Thm3}.
\end{proof}

\begin{rem} In  \cite{Gu07a}, Gubeladze proves an unstable version of
  his Dilation Theorem for the groups $K_1$ and $K_2$, which is valid for any
  regular coefficient ring in place of the field $k$.  In
  \cite{Gu07b}, he proves that his Dilation Theorem remains valid if
  one replaces the field $k$ by any
  regular coefficient ring that contains a copy of $\Q$.

\end{rem}

\medskip
\goodbreak

\goodbreak
\subsection*{Acknowledgements}
The third author thanks Joseph Gubeladze and Srikanth Iyengar for
useful conversations that contributed to this
paper.

\end{document}